\documentclass[10pt, reqno]{amsart}
\usepackage{graphicx, amssymb, amsmath, amsthm}
\numberwithin{equation}{section}

\def\pa{\partial}

\let\Re=\undefined\DeclareMathOperator*{\Re}{Re}
\let\Im=\undefined\DeclareMathOperator*{\Im}{Im}

\newcommand{\R}{\mathbb{R}}
\newcommand{\C}{\mathbb{C}}
\newcommand{\nsc}{\vert\nabla\vert^{s_c}}
\newcommand{\eps}{\varepsilon}

\newcommand{\norm}[1]{\| #1\|}
\newcommand{\xonorm}[3]{\|#1\|_{L_t^{#2}L_x^{#3}}}
\newcommand{\xonorms}[2]{\|#1\|_{L_{t,x}^{#2}}}

\newtheorem{theorem}{Theorem}[section]

\newtheorem{lemma}[theorem]{Lemma}

\newtheorem{corollary}[theorem]{Corollary}

\newtheorem{proposition}[theorem]{Proposition}

\theoremstyle{definition}
\newtheorem{definition}[theorem]{Definition}
\newtheorem{remark}[theorem]{Remark}

\makeatletter
\newcommand{\Extend}[5]{\ext@arrow0099{\arrowfill@#1#2#3}{#4}{#5}}
\makeatother

\begin{document}
\title[Radial defocusing NLS in dimension four]{The radial defocusing energy-supercritical NLS in dimension four}

\author[C. Lu]{Chao Lu}
\address{The Graduate School of China Academy of Engineering Physics, P. O. Box 2101, Beijing, China, 100088}
\email{luchao@mail.bun.edu.cn}

\author[J. Zheng]{Jiqiang Zheng}
\address{Universit\'e Nice Sophia-Antipolis, 06108 Nice Cedex 02, France}
\email{zhengjiqiang@gmail.com, zheng@unice.fr}

\begin{abstract}
We consider the radial defocusing nonlinear Schr\"odinger equations
$iu_t+\Delta u=|u|^{p}u$ with supercritical exponent $p>4$ in four space dimensions,  and prove that any radial solution that remains bounded in the critical
Sobolev space must be global and scatter.

\end{abstract}

 \maketitle

\begin{center}
 \begin{minipage}{100mm}
   { \small {{\bf Key Words:}  Nonlinear Schr\"odinger equation;  scattering; long-time Strichartz estimate; energy supercritical;
   concentration-compactness.}
      {}
   }\\
    { \small {\bf AMS Classification:}
      {35P25,  35Q55, 47J35.}
      }
 \end{minipage}
 \end{center}

\section{Introduction}

\noindent This paper is
devoted to  study the initial-value problem for defocusing
nonlinear Schr\"odinger equations of the form
\begin{align} \label{equ1.1}
\begin{cases}    (i\partial_t+\Delta)u= |u|^pu,\quad
(t,x)\in\R\times\R^4,
\\
u(0,x)=u_0(x)\in H^{s_c}(\R^4),
\end{cases}
\end{align}
where $u(t,x):\R\times\R^4\to \C$, and $s_c=2-\tfrac2p$ with $p>4$.  We prove that for $s_c>3/2$ (i.e. $p>4$), any radial maximal-lifespan solution that remains uniformly bounded in $\dot{H}_x^{s_c}(\R^4)$ must be global and scatter. In \cite{DMMZ,MMZ}, the authors proved the analogous statement for \eqref{equ1.1} with $2<p\leq4$ and non-radial initial data. In this paper, we treat the remaining case $p>4$ but for radial initial data.

The equation \eqref{equ1.1} has the scaling invariance symmetry:
\begin{equation}\label{eq:scaling}
u_\lambda(x,t)=\lambda^{\frac{2}{p}}u(\lambda
x,\lambda^2t),
\end{equation}
in the sense that both the equation and the $\dot{H}^{s_c}(\R^4)$-norm are
invariant under the scaling transformation:
$$\|u_\lambda(t,\cdot)\|_{\dot{H}^{s_c}_x(\R^4)}=\|u(\lambda^2t,\cdot)\|_{\dot{H}^{s_c}_x(\R^4)}.$$
This scaling defines a notion of \emph{criticality} for \eqref{equ1.1}.   If we take $u_0\in\dot{H}_x^s(\R^4)$, then for $s=s_c$ we call the problem \eqref{equ1.1} \emph{critical}. For $s>s_c$ we call the problem \emph{subcritical}, while for $s<s_c$ we call the problem \emph{supercritical}.

If the solution $u$ of \eqref{equ1.1} has sufficient decay at
infinity and smoothness, it conserves  mass and energy:
\begin{equation}
\begin{split}
M(u)&=\int_{\mathbb{R}^4}|u(t,x)|^2dx=M(u_0),\\
E(u)&=\frac12\int_{\mathbb{R}^4}|\nabla
u|^2dx+\frac1{p+2}\int_{\mathbb{R}^4}|u(t,x)|^{p+2}dx=E(u_0).
\end{split}
\end{equation}
As similarly explained in \cite{Cav},  the above quantities are also
conserved for the energy solutions $u \in C^0_t(\mathbb{R},
H^1(\mathbb{R}^4))$. We call $\dot{H}^1_x(\R^4)$ the energy space.

There is a lot of works on the  global well-posedness and scattering
for Schr\"odinger equations
$$i\pa_tu-\Delta u=\pm|u|^pu,\quad (t,x)\in\R\times\R^d\quad\text{(NLS)}$$ especially for mass-critical $(p=4/d)$ or
energy-critical $(p=4/(d-2),~d\geq3)$ (NLS), most notably by Bourgain \cite{Bou},
Colliander, Keel, Staffilanni, Takaoka and Tao \cite{CKSTT07}, Kenig
and Merle \cite{KM} and Killip and Visan \cite{KV20101,KV} and Visan
\cite{Visan2007,Visan2011}, and Dodson\cite{Dodson4} for the energy-critical case and Tao,
Visan and Zhang \cite{TVZ2007}, Killip, Tao and Visan
\cite{KTV2009}, Killip, Visan and Zhang \cite{KVZ2008} and Dodson
\cite{Dodson3,Dodson2,Dodson1,Dodson} for the mass-critical case.

So far, there is no technology for treating large-data (NLS) without
some a priori control of a critical norm other than the
energy-critical (NLS) and mass-critical (NLS) at the present moment.
Kenig-Merle \cite{KM2010} first showed that if the radial solution $u$ to NLS is
such that $u\in L_t^\infty(I; \dot{H}^{s_c}(\R^3))$ with
$s_c=1/2$, then $u$ is
global and scatters. They were able to handle this case by making
use of their concentration compactness technique (as in \cite{KM}),
together with the Lin-Strauss Morawetz inequality which scales like
$\dot{H}^\frac12_x(\R^3)$. Lately,  such result has been extended to high dimensional and inter-critical cases by Murphy \cite{Mu,Mur2,Mur}.
In \cite{KV2010}, Killip--Visan
proved some cases of  the energy-supercritical regime. In
particular, they deal with the case of a cubic nonlinearity for
$d\geq5$, along with some other cases for which $s_c>1$ and
$d\geq5$. The restriction to high dimensions stems from
 the so-called double Duhamel trick;
see \cite{KV2010}  for more details. Recently, by making use of the
tool ``long time Strichartz estimate" \`a la Dodson \cite{Dodson3}, and  a frequency-localized interaction Morawetz inequality, the authors
\cite{DMMZ,MMZ} treat the case $2<p\leq 4$ (i.e. $1<s_c\leq3/2$) in four space dimensions. In this paper, we address
the remaining cases $p>4$ but for radial initial data, where the techniques in \cite{DMMZ,MMZ} break down.

Now, let us make the notion of a solution more precise.
 A function $u:I\times\R^4\to\mathbb{C}$ on a non-empty
time interval $I\ni 0$ is a \emph{solution} to
\eqref{equ1.1} if
it belongs to $C_t\dot{H}_x^{s_c}(J\times\R^4)\cap L_{t,x}^{3p}(J\times\R^4)$
for any compact interval $J\subset I$ and
obeys the Duhamel formula
\begin{equation}\label{duhamel.1}
u(t)=e^{it\Delta}u_0-i\int_{0}^te^{i(t-s)\Delta}F(u(s))\,ds
\end{equation}
for each $t\in I$. We call $I$
the \emph{lifespan} of $u$. We say that $u$ is a \emph{maximal-lifespan solution}
if it cannot be extended to any strictly larger interval.
We call $u$ \emph{global} if $I=\R.$

For a solution $u:I\times\R^4\to\C$ to \eqref{equ1.1}, we define the \emph{scattering size} of $u$ on $I$ by
            \begin{equation}\label{scattersize1.1}
    S_I(u):=\int_I\int_{\R^4}\vert u(t,x)\vert^{3p}\,dx\,dt.
                \end{equation}
If there exists $t_0\in I$ such that $S_{[t_0,\sup I)}(u)=\infty$, we say that $u$ \emph{blows up forward in time}. If there exists $t_0\in I$ such that $S_{(\inf I,t_0]}(u)=\infty$, we say that $u$ \emph{blows up backward in time}.

 If $u$ is a global and obeys $S_{\R}(u)<\infty$, then  standard arguments show that $u$ \emph{scatters} in the sense
tha there exist unique $u_{\pm}\in\dot{H}^{s_c}_x(\R^4)$ such that
        $$\lim_{t\to\pm\infty}\|u(t)-e^{it\Delta}u_{\pm}\|_{\dot{H}_x^{s_c}(\R^4)}=0.$$

Our main result is the following

\begin{theorem}\label{theorem}
Let $s_c>3/2$, i.e. $p>4$. Suppose
$u:~I\times\R^4\to\C$ is a radial maximal-lifespan solution to
\eqref{equ1.1} such that
\begin{equation}\label{assume1.1}
u\in L_t^\infty\dot{H}_x^{s_c}(I\times\R^4).
\end{equation} Then $u$ is global and scatters, with
    $$S_\R(u)\leq C(\|u\|_{L_t^\infty(\R,\dot{H}_x^{s_c}(\R^4))})$$
for some function $C:[0,\infty)\to[0,\infty).$
\end{theorem}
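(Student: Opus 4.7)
The plan is to execute the concentration-compactness/rigidity scheme of Kenig-Merle, adapted to the energy-supercritical radial setting in the spirit of Killip-Visan \cite{KV2010} and the authors' prior works \cite{DMMZ,MMZ}. I would first develop local well-posedness, small-data scattering, and a stability lemma in $\dot H^{s_c}_x(\R^4)$ via Strichartz estimates in the $L_{t,x}^{3p}$ space appearing in the definition of solution. Assuming the theorem fails, a Keraani-type linear profile decomposition in $\dot H^{s_c}_x$ restricted to radial data, together with the stability lemma, produces a minimal radial blow-up solution $u_c$ to \eqref{equ1.1} that is almost periodic modulo the surviving scaling symmetry: there exists $N:I\to(0,\infty)$ such that
$$
\bigl\{\,N(t)^{-2/p}\,u_c(t,\cdot/N(t))\ :\ t\in I\,\bigr\}
$$
is precompact in $\dot H^{s_c}_x(\R^4)$.

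Following the classification in \cite{KV2010}, I would then use the Duhamel "no-waste" identities and almost periodicity to reduce $u_c$ to one of three enemies according to the behavior of $N(t)$: a self-similar solution with $N(t)\sim t^{-1/2}$ on $I=(0,\infty)$; a soliton-like solution with $N(t)\equiv 1$ on $I=\R$; or a frequency cascade on $I=\R$. The self-similar scenario should be eliminated by an additional-regularity argument built from local smoothing together with the improved radial Sobolev embedding in dimension four, which forces $u_c$ into a space of regularity strictly above $\dot H^{s_c}_x$ and is incompatible with $N(t)\to\infty$ as $t\to 0^+$.

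The core of the argument is an interplay between a Dodson-style long-time Strichartz estimate giving frequency-by-frequency decay of $P_{\leq N}u_c$ in suitable Strichartz norms as $N/\inf_t N(t)\to 0$, and the radial Lin-Strauss Morawetz inequality, which in dimension four scales like $\dot H^{1/2}_x$. Applied to a high-frequency piece $P_{\geq N_0}u_c$ with $N_0\ll \inf_t N(t)$, the Morawetz yields, schematically,
$$
\int_I\int_{\R^4}\frac{|P_{\geq N_0}u_c|^{p+2}}{|x|}\,dx\,dt\lesssim \|P_{\geq N_0}u_c\|_{L^\infty_t\dot H^{1/2}_x}^2+\mathrm{Err}(N_0),
$$
with $\mathrm{Err}(N_0)$ collecting commutator terms produced by the frequency projection, which the long-time Strichartz should render negligible compared to the main term. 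In the soliton case, almost periodicity forces a positive pointwise lower bound on the integrand, so the LHS grows linearly in $|I|=\infty$ while the RHS stays bounded, a contradiction. In the cascade case, the scaling identity $\|u_c(t)\|_{\dot H^{1/2}_x}\sim N(t)^{1/2-s_c}\to 0$ along a cascade subsequence (since $s_c>3/2>1/2$), combined with the Morawetz bound, forces $u_c\equiv 0$. The principal obstacle I anticipate is precisely the large gap between the critical regularity $s_c>3/2$ we control and the $\dot H^{1/2}_x$ scaling of the only available radial Morawetz: the low-frequency tail of $u_c$ is not a priori in $\dot H^{1/2}_x$, and the whole rigidity step hinges on engineering the long-time Strichartz estimate tightly enough that $\mathrm{Err}(N_0)$ decays strictly faster than the main Morawetz term as $N_0\to 0$, so that the contradiction genuinely closes.
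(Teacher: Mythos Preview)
Your overall architecture is right and matches the paper: concentration-compactness to produce a minimal radial almost periodic blow-up solution, then a rigidity step built on a Dodson-style long-time Strichartz estimate together with a frequency-localized Lin--Strauss Morawetz. Your treatment of the soliton/quasi-soliton case is essentially the paper's Section~5--6 argument.

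There is, however, a genuine gap in your cascade step. You invoke a ``scaling identity'' $\|u_c(t)\|_{\dot H^{1/2}_x}\sim N(t)^{1/2-s_c}$, but almost periodicity only gives precompactness of the rescaled orbit in $\dot H^{s_c}_x$; it says nothing about $\dot H^{1/2}_x$. A priori $u_c$ need not lie in $\dot H^{1/2}_x$ at all, and the high-frequency piece $P_{\geq N_0}u_c$ satisfies only the trivial Bernstein bound $\|P_{\geq N_0}u_c\|_{L_t^\infty\dot H^{1/2}_x}\lesssim N_0^{1/2-s_c}$, which is independent of $t$ and does not vanish along a cascade subsequence. So the Morawetz inequality alone does not close the cascade case. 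The paper resolves this by a separate and substantial argument: using the long-time Strichartz estimate (with the finiteness of $K=\int N(t)^{3-2s_c}\,dt$) inside a bootstrap (Proposition~4.2) to push regularity of $u$ all the way down to $\dot H^{-\varepsilon}_x$, and then using conservation of mass together with $N(t)\to\infty$ to force $u\equiv 0$. This negative-regularity bootstrap is the missing ingredient in your proposal.

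A smaller point: the paper does not use the three-enemies taxonomy (self-similar / soliton / cascade) from \cite{KV2010}. Instead, after normalizing $N(t)\geq 1$ it splits directly on whether $\int_0^{T_{\max}} N(t)^{3-2s_c}\,dt$ is finite (rapid frequency-cascade) or infinite (quasi-soliton); this quantity is exactly the one produced by both the long-time Strichartz estimate and the Morawetz lower bound, so no separate self-similar elimination is needed. This is fortunate, since the additional-regularity route you sketch for the self-similar case in \cite{KV2010} relies on the double-Duhamel trick, which fails in dimension four (as the paper's introduction notes).
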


Let us turn now to an outline of the arguments we will use to establish Theorem~\ref{theorem}.

\subsection{Outline of the proof of Theorem \ref{theorem}} We will argue by contradiction.
 For any $0\leq E_0<+\infty,$ we  define
$$L(E_0):=\sup\Big\{S_I(u):~u:~I\times\R^4\to \C\ \text{such\ that}\
\sup_{t\in I}\big\|u\big\|_{\dot H^{s_c}_x(\R^4)}^2\leq E_0\Big\},$$
where the supremum ranges all solutions $u:~I\times\R^4\to
\C$ to \eqref{equ1.1} satisfying $\big\|u\big\|_{\dot
H^{s_c}(\R^4)}^2\leq E_0.$ Thus, $L:\ [0,+\infty)\to [0,+\infty)$ is
a non-decreasing function. Moreover, from  the small data theory (via Strichartz estimates and contraction mapping, cf. \cite{Cav, KVnote}), one has
$$L(E_0)\lesssim E_0^\frac12\quad \text{for}\quad E_0\leq\eta_0^2,$$
where $\eta_0=\eta(d)$ is the threshold from the small data theory.

It follows from the stability theory  that
$L$ is continuous. Thus, there must exist a unique critical
$E_c\in(0,+\infty]$ such that $L(E_0)<+\infty$ for $E_0<E_c$ and
$L(E_0)=+\infty$ for $E_0\geq E_c$. In particular, if
$u:~I\times\R^4\to \C$ is a maximal-lifespan solution to
\eqref{equ1.1} such that $\sup\limits_{t\in I}\big\|u\big\|_{\dot
H^{s_c}_x(\R^4)}^2< E_c,$ then $u$ is global and moreover,
$$S_\R(u)\leq L\big(\big\|u\big\|_{L_t^\infty(\R;\dot H^{s_c}(\R^4))}\big).$$ The proof of Theorem \ref{theorem} is
equivalent to show $E_c=+\infty.$ We
argue by contradiction. The failure of Theorem \ref{theorem} would
imply the existence of very special class of solutions. On the other
hand, these solutions have so many good properties that they do not
exist. Thus we get a contradiction. While we will make some further
reductions later, the main property of the special counterexamples
is almost periodicity modulo symmetries:

\begin{definition}[Almost periodic solutions]\label{AP}
Let $s_c>0.$ A solution $u:I\times\R^4\to\C$ to \eqref{equ1.1} is called \emph{almost periodic }(\emph{modulo symmetries}) if $u\in L_t^\infty\dot{H}_x^{s_c}(I\times\R^4)$ and there exist functions
$N:~I\to\R^+,$  $x:I\to\R^4$  and $C:\R^+\to\R^+$ such that
for all $t\in I$ and $\eta>0$,
\begin{equation}\label{apss}
\int_{|x-x(t)|\geq\frac{C(\eta)}{N(t)}}\big||\nabla|^{s_c}
u(t,x)\big|^2\,dx+\int_{|\xi|\geq C(\eta)N(t)}|\xi|^{2s_c}\vert\hat
u(t,\xi)\vert^2\,d\xi\leq\eta.
\end{equation}
We call $N(t)$ the \emph{frequency scale function},  $x(t)$ the \emph{spatial center function}, and
$C(\eta)$ the \emph{compactness modulus function}.
\end{definition}

\begin{remark}\label{remark:radial} $(i)$ As a consequence of radiality, the solutions we consider can only concentrate near the spatial origin. In particular, we may take $x(t)\equiv 0$.

$(ii)$
The Arzel\`a--Ascoli theorem tells us that a family of functions $\mathcal{F}$ is precompact in $\dot{H}_x^{s_c}(\R^4)$
if and only if it is norm-bounded and there exists a compactness modulus function $C(\eta)$ such that
    $$\int_{\vert x\vert\geq C(\eta)}\big\vert\nsc f(x)\big\vert^2\,dx+\int_{\vert\xi\vert\geq C(\eta)}\vert\xi\vert^{2s_c}\vert \hat{f}(\xi)\vert^2\,d\xi\leq\eta$$
uniformly for $f\in\mathcal{F}$. Thus we see that a solution $u:I\times\R^4\to\C$ is radial almost periodic if and only if
    $$\{u(t):t\in I\}\subset\{\lambda^{2/p}f(\lambda x):\lambda\in(0,\infty),\; \text{and}\; f\in K\}$$
for some compact $K\subset \dot{H}_x^{s_c}(\R^4).$ We deduce that there exists a function $c:\R^+\to\R^+$ such that
\begin{equation}\label{xiaoc}
\int_{|x|\leq\frac{c(\eta)}{N(t)}}\big||\nabla|^{s_c}
u(t,x)\big|^2\,dx+\int_{|\xi|\leq c(\eta)N(t)}|\xi|^{2s_c} |\hat{u}(t,\xi)|^2\,d\xi\leq\eta
\end{equation}
for all $t\in I$.

\end{remark}

By the same argument as in \cite{DMMZ,KVnote,MMZ}, we can show that the failure of Theorem \ref{theorem} would
imply the existence of the almost periodic solutions as follows.

\begin{theorem}[Reduction to almost periodic solutions]\label{thm:AP1} If Theorem~\ref{theorem} fails, then there exists a radial maximal-lifespan solution $u:I\times\R^4\to\C$ to \eqref{equ1.1} that is almost periodic and blows up in both time directions.
\end{theorem}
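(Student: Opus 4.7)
The proof follows the now-standard concentration-compactness reduction of Kenig--Merle and Killip--Visan, adapted to the radial energy-supercritical setting exactly as in \cite{DMMZ,KVnote,MMZ}. My plan has three main steps.

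\emph{Step 1: Extracting a critical sequence.} Assuming Theorem~\ref{theorem} fails, the critical value $E_c$ satisfies $0<E_c<\infty$ by the small-data theory and the definition of $L$. By definition, I can select a sequence of radial maximal-lifespan solutions $u_n:I_n\times\R^4\to\C$ to \eqref{equ1.1} with $\sup_{t\in I_n}\|u_n(t)\|_{\dot H^{s_c}}^2\to E_c$ and $S_{I_n}(u_n)\to\infty$. After time translation, I may arrange that each $u_n$ blows up in both time directions relative to the base time $0$, so that both $S_{[0,\sup I_n)}(u_n)$ and $S_{(\inf I_n,0]}(u_n)$ diverge.

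\emph{Step 2: Profile decomposition and nonlinear profiles.} I apply a linear profile decomposition at the critical regularity $\dot H^{s_c}$ to the radial sequence $u_n(0)$, obtaining
\[ u_n(0)=\sum_{j=1}^{J}(\lambda_n^j)^{-2/p}\bigl[e^{it_n^j\Delta}\phi^j\bigr]\!\bigl((\lambda_n^j)^{-1}x\bigr)+w_n^J, \]
with parameters asymptotically orthogonal and the remainder $w_n^J$ satisfying $\lim_{J\to\infty}\limsup_{n\to\infty}S_\R(e^{it\Delta}w_n^J)=0$. Radiality eliminates the spatial translation parameter. To each profile $\phi^j$ I associate its nonlinear profile $v^j$, namely the maximal-lifespan solution of \eqref{equ1.1} obtained by solving from the free-wave data $e^{it_n^j\Delta}\phi^j$ after appropriate rescaling. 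The $\dot H^{s_c}$ asymptotic orthogonality forces $\sum_j\|\phi^j\|_{\dot H^{s_c}}^2\leq E_c$; if more than one profile were nontrivial, or if a single surviving profile fell strictly below $E_c$, each $v^j$ would have $\sup_t\|v^j\|_{\dot H^{s_c}}^2<E_c$ and hence be global with $S_\R(v^j)\leq L(E_c-\delta)<\infty$. Combining the $v^j$ via the nonlinear stability theory for \eqref{equ1.1} would then produce an approximate solution whose scattering norm is bounded uniformly in $n$, contradicting $S_{I_n}(u_n)\to\infty$.

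\emph{Step 3: Almost periodicity and symmetry reduction.} It follows that there is exactly one nontrivial profile and $\lim_{n\to\infty}\|w_n^J\|_{\dot H^{s_c}}=0$ after suitable normalization, so (after passing to a subsequence and applying a rescaling) $u_n$ converges to a radial maximal-lifespan solution $u:I\times\R^4\to\C$ with $\sup_{t\in I}\|u(t)\|_{\dot H^{s_c}}^2=E_c$ that blows up in both time directions. Replaying the profile-decomposition argument with the base time $0$ replaced by an arbitrary $t\in I$ shows that the orbit $\{u(t):t\in I\}$ is precompact in $\dot H^{s_c}(\R^4)$ modulo the scaling symmetry, with no spatial translation needed due to radiality. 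By the Arzel\`a--Ascoli characterization in Remark~\ref{remark:radial}(ii), this is precisely almost periodicity \eqref{apss} with $x(t)\equiv 0$. The main technical obstacle is the linear profile decomposition and nonlinear stability at the supercritical regularity $\dot H^{s_c}$: there is no conservation law at this scale, so one must close the bootstrap using only the a priori bound \eqref{assume1.1}, and one must verify that the scaling parameters $\lambda_n^j$ and time shifts $t_n^j$ behave compatibly with Strichartz/local smoothing estimates at the nonscaling-invariant regularity.
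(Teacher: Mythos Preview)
Your proposal is correct and follows exactly the approach the paper takes: the paper does not give an independent proof but simply invokes the standard concentration-compactness/profile-decomposition argument of Kenig--Merle and Killip--Visan as carried out in \cite{DMMZ,KVnote,MMZ}, and your three-step outline is a faithful sketch of that argument in the radial setting. One minor slip: in your final sentence you call $\dot H^{s_c}$ the ``nonscaling-invariant regularity,'' but $s_c$ is precisely the scaling-critical exponent---the genuine obstacle is the absence of a conservation law at this level, which you correctly identify just before.
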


Furthermore, one can adopt the proof of
Lemma 5.18 in \cite{KVnote} to prove that the almost periodic
solutions satisfy the following local constancy property:
\begin{lemma}[Local constancy]\label{local constancy} Let $u:I\times\R^4\to\C$ be a maximal-lifespan almost periodic solution to \eqref{equ1.1}. Then there exists $\delta=\delta(u)>0$ such that for all $t_0\in I$,
    $$[t_0-\delta N(t_0)^{-2},t_0+\delta N(t_0)^{-2}]\subset I.$$
Moreover,  $N(t)\sim_u N(t_0)$
for $\vert t-t_0\vert\leq\delta N(t_0)^{-2}.$
\end{lemma}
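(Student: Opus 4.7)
The plan is to argue by contradiction, exploiting the scaling symmetry of \eqref{equ1.1} together with the precompactness encoded in the almost periodicity. For each $t_0\in I$, I would recast the statement by passing to the rescaled solution
$$v_{t_0}(\tau,y) := N(t_0)^{-2/p}\, u\bigl(t_0 + N(t_0)^{-2}\tau,\; N(t_0)^{-1}y\bigr),$$
which also solves \eqref{equ1.1}. The first conclusion $[t_0-\delta N(t_0)^{-2}, t_0+\delta N(t_0)^{-2}]\subset I$ becomes: $[-\delta,\delta]$ lies in the lifespan of $v_{t_0}$; the second conclusion becomes: the corresponding frequency-scale of $v_{t_0}$ is comparable to $1$ on $[-\delta,\delta]$. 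This reduction turns the problem into a statement about a family of rescaled solutions whose initial data form a precompact subset of $\dot H^{s_c}_x(\R^4)$.

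The precompactness is exactly the content of Remark~\ref{remark:radial}(ii): almost periodicity with $x(t)\equiv 0$ is equivalent to $\{N(t)^{-2/p} u(t, N(t)^{-1}\cdot) : t\in I\}$ being precompact in $\dot H^{s_c}_x(\R^4)$. Hence $\{v_{t_0}(0,\cdot) : t_0\in I\}$ is precompact. Now assume, for contradiction, that no $\delta$ works. Then there exist $t_n\in I$ and $\delta_n\downarrow 0$ such that either (a) $t_n\pm\delta_n N(t_n)^{-2}$ fails to lie in $I$, or (b) there exists $t_n'$ with $|t_n'-t_n|\le\delta_n N(t_n)^{-2}$ along which $N(t_n')/N(t_n)$ is unbounded or tends to zero. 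In either case, I pass to a subsequence so that $v_{t_n}(0)$ converges in $\dot H^{s_c}_x$ to some limit $v_\infty^0$.

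Next I would apply the local well-posedness theory (Strichartz plus contraction) to produce a solution $v_\infty$ with initial data $v_\infty^0$ on a symmetric interval $[-\delta_*,\delta_*]$ with finite Strichartz norms, and then invoke the stability theory for \eqref{equ1.1} to upgrade this to: $v_{t_n}$ is defined on $[-\delta_*/2,\delta_*/2]$ and is close to $v_\infty$ in $L^\infty_t\dot H^{s_c}_x$, for all $n$ sufficiently large. Rescaling back places $[t_n-(\delta_*/2)N(t_n)^{-2},\, t_n+(\delta_*/2)N(t_n)^{-2}]\subset I$, ruling out alternative~(a). To rule out (b), suppose $t_n'=t_n+\tau_n N(t_n)^{-2}$ with $\tau_n\to\tau^*\in[-\delta_*/2,\delta_*/2]$ and $N(t_n')/N(t_n)\to 0$ or $\infty$. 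Applying \eqref{apss} at time $t_n'$ and rewriting in the rescaled variables forces the $\dot H^{s_c}_x$-mass of $v_{t_n}(\tau_n,\cdot)$ to evacuate either to $|\xi|=\infty$ or to $|\xi|=0$. But time-continuity of $v_\infty$ together with the stability estimate gives $v_{t_n}(\tau_n)\to v_\infty(\tau^*)$ in $\dot H^{s_c}_x$, which is incompatible with any such evacuation.

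The main obstacle I anticipate is deploying the stability theory carefully enough to propagate $\dot H^{s_c}_x$-closeness of initial data to $\dot H^{s_c}_x$-closeness of solutions on a quantitative time interval: the critical norm is not conserved, so this must come from a perturbation estimate in Strichartz spaces adapted to $\dot H^{s_c}_x$, and one must control the nonlinear term at the energy-supercritical exponent $p>4$. The radiality assumption plays no direct role in the local-constancy step itself; it is only used upstream to set $x(t)\equiv 0$ so that no translation appears in the rescaling. The overall structure parallels Lemma~5.18 of \cite{KVnote}; the real task is to verify that all three ingredients—local well-posedness, stability, and compactness at $\dot H^{s_c}_x$—are available in the present $s_c>3/2$ regime on $\R^4$.
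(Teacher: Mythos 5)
Your argument is correct and is essentially the standard proof the paper invokes: the paper does not write out a proof but defers to Lemma~5.18 of \cite{KVnote}, whose argument is exactly your rescaling--compactness--local well-posedness--stability scheme (including the contradiction argument for the comparability of $N(t)$ and $N(t_0)$). The ingredients you flag as needing verification (local theory and stability in $\dot H^{s_c}_x(\R^4)$ for $p>4$) are indeed available and are already used implicitly in the paper's reduction to almost periodic solutions.
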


\begin{definition}[Interval of local constancy]\label{def:localconstancy} Let $u:I\times\R^4\to\C$ be a
almost periodic solution. We can divide $I$ into consecutive
intervals $J_k$ such that
$$\|u(t,x)\|_{L_{t,x}^{3p}(J_k\times\R^4)}=1,~\text{and}~N(t)\equiv N_k,~t\in J_k.$$
These intervals are called as the intervals of local constancy. If
$J\subset I$ is a union of consecutive intervals of local
constancy, then we have
\begin{equation}
\sum_{J_k}N(J_k)^{1-2s_c}\sim\int_JN(t)^{3-2s_c}dt.
\end{equation}
\end{definition}

Lemma~\ref{local constancy} provides information about the behavior of the frequency scale at blowup (cf. \cite[Corollary~5.19]{KVnote}):
\begin{corollary}[$N(t)$ at blowup]\label{constancy cor}
 Let $u:I\times\R^4\to\C$ be a maximal-lifespan almost periodic solution to \eqref{equ1.1}. If  $T$ is a finite endpoint of $I$ then $N(t)\gtrsim_u | T-t|^{-\frac{1}{2}}.$
\end{corollary}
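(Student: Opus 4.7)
The plan is to derive this estimate as a direct consequence of the local constancy lemma (Lemma~\ref{local constancy}) together with maximality of the lifespan. Suppose for concreteness that $T=\sup I$ is finite; the case of a finite $\inf I$ is handled by the symmetric argument. Since $u$ is a maximal-lifespan solution and the local constancy lemma would otherwise produce a neighborhood of $T$ inside $I$, we must have $T\notin I$, so $I$ is open at its right endpoint.

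Fix any $t\in I$. By Lemma~\ref{local constancy}, there exists $\delta=\delta(u)>0$, depending only on $u$ (through its compactness modulus function), such that
\begin{equation*}
[t-\delta N(t)^{-2},\, t+\delta N(t)^{-2}]\subset I.
\end{equation*}
Since $T\notin I$ and $T=\sup I$, the right endpoint of this interval must satisfy $t+\delta N(t)^{-2}<T$. Rearranging gives $N(t)^2>\delta/(T-t)$, i.e.\ $N(t)\gtrsim_u (T-t)^{-1/2}$, which is the desired bound. The argument for a finite left endpoint of $I$ is identical, using the left half $[t-\delta N(t)^{-2},t]\subset I$.

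There is no genuine obstacle here: all the work has been front-loaded into establishing the local constancy lemma itself, which encodes exactly the almost-periodic rigidity needed to preclude rapid vanishing of $N(t)$ near a blowup time. Once that lemma is in hand, the corollary is a one-line rearrangement, so the proof should be quite short.
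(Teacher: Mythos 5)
Your proof is correct and is essentially the standard argument: the paper itself gives no proof, deferring to \cite[Corollary~5.19]{KVnote}, whose proof is exactly this rearrangement of the inclusion $[t,t+\delta N(t)^{-2}]\subset I$ from Lemma~\ref{local constancy} against $\sup I=T$. (The remark that $T\notin I$ is not even needed: the containment already forces $t+\delta N(t)^{-2}\leq T$.)
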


We also need the following result, which relates the frequency scale
function of an almost periodic solution to its Strichartz norms.
\begin{lemma}[Spacetime bounds]\label{fspssn} Let $u:I\times\R^4\to\C$ be an almost periodic solution to \eqref{equ1.1}. Then, there holds
$$\int_I N(t)^2\,dt\lesssim_u\big\||\nabla|^{s_c} u\big\|_{L_t^{q}L_x^{r}(I\times\R^4)}^q\lesssim_u 1+\int_I N(t)^2\,dt,$$
for $(q,r)$ admissible $($see Definition \ref{def1} below$)$ with $q<\infty$.
\end{lemma}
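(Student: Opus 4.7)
The plan is to partition $I$ into the intervals of local constancy $\{J_k\}$ from Definition~\ref{def:localconstancy}---on each of which $N(t)\equiv N_k$, $\|u\|_{L^{3p}_{t,x}(J_k\times\R^4)}=1$, and $|J_k|\sim_u N_k^{-2}$ (the length estimate following from Lemma~\ref{local constancy} together with Bernstein applied to the concentration estimates \eqref{apss} and \eqref{xiaoc})---and to establish both directions interval by interval, in the spirit of the analogous arguments in \cite{KVnote, DMMZ, MMZ}.

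For the lower bound, fix a small $\eta>0$ depending on $u$. Radiality together with \eqref{apss} and \eqref{xiaoc} localize the $\dot H^{s_c}$-mass of $u(t)$ into the dyadic annulus $c(\eta)N(t)\le|\xi|\le C(\eta)N(t)$ up to error $\eta$, so Bernstein gives
$$\||\nabla|^{s_c} u(t)\|_{L_x^r}\gtrsim_u N(t)^{4(\frac12-\frac1r)}\,\||\nabla|^{s_c} u(t)\|_{L_x^2}.$$
The last factor is $\sim_u 1$: the upper bound is $u\in L_t^\infty\dot H_x^{s_c}$, while the lower bound follows from $\|u\|_{L^{3p}_{t,x}(J_k)}=1$, which by small-data Strichartz forces $\|u(t_k)\|_{\dot H_x^{s_c}}\gtrsim_u 1$ for any $t_k\in J_k$. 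Admissibility $\tfrac{2}{q}+\tfrac{4}{r}=2$ is equivalent to $q\cdot 4(\tfrac12-\tfrac1r)=2$, so raising to the $q$-th power and integrating over $J_k$ yields $\||\nabla|^{s_c} u\|_{L_t^q L_x^r(J_k\times\R^4)}^q\gtrsim_u\int_{J_k}N(t)^2\,dt$; summing over $k$ gives the lower bound.

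For the upper bound, I would show $\||\nabla|^{s_c} u\|_{L_t^q L_x^r(J_k)}^q\lesssim_u 1$ on each $J_k$ and sum. Writing the Duhamel identity from a base point $t_k\in J_k$ and applying $\dot H^{s_c}$-level Strichartz gives
$$\||\nabla|^{s_c} u\|_{L_t^q L_x^r(J_k)}\lesssim_u 1+\||\nabla|^{s_c}(|u|^pu)\|_{L_t^{\tilde q'}L_x^{\tilde r'}(J_k)}$$
for a suitable dual admissible pair $(\tilde q,\tilde r)$. A fractional Leibniz/chain rule together with H\"older reduces the nonlinear term to a product of powers of $\|u\|_{L^{3p}_{t,x}(J_k)}=1$ and $\||\nabla|^{s_c} u\|_{L_t^q L_x^r(J_k)}$; since the former factor is bounded but not small, I would first subdivide $J_k$ into $O_u(1)$ pieces on which the $L^{3p}_{t,x}$-norm of $u$ falls below a small-data threshold, run a continuity argument on each piece, and sum. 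Since the number of local-constancy intervals contained in $I$ is at most $2+\int_I N(t)^2\,dt$ (at most two partial boundary intervals plus one full interval per unit of $\int N^2$), summing yields $\||\nabla|^{s_c} u\|_{L_t^q L_x^r(I)}^q\lesssim_u 1+\int_I N(t)^2\,dt$.

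The main technical obstacle is the nonlinear Strichartz estimate at regularity $s_c>3/2$: since $|u|^pu$ is not smooth when $p$ is not an even integer, distributing $s_c$ fractional derivatives on it requires a Christ--Weinstein-type fractional chain rule (available here, as the nonlinearity has regularity exceeding $\lceil s_c\rceil$ for $p>4$). The H\"older decomposition must be arranged so that the $L^{3p}_{t,x}$-norm of $u$ enters at a positive power (for the subdivision step) while $\||\nabla|^{s_c} u\|_{L_t^q L_x^r}$ reappears on the right (to close the bootstrap), all while $(\tilde q,\tilde r)$ remains Strichartz admissible in dimension four. For $q$ near the endpoints of the admissible range, splitting $|u|^p$ into several H\"older factors and interpolating among Strichartz norms may be needed; this is the only substantive computation, after which both inequalities follow from the partition scheme above.
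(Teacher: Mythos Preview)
Your proposal is correct and sketches precisely the argument behind Lemma~5.21 of \cite{KVnote}, which is all the paper invokes in place of a proof. The partition into local-constancy intervals, the Bernstein lower bound exploiting the frequency concentration \eqref{apss}--\eqref{xiaoc}, and the per-interval Strichartz bootstrap (with subdivision to bring the $L_{t,x}^{3p}$-norm below the small-data threshold) are exactly the standard ingredients there.
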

\begin{proof}
See Lemma 5.21 in \cite{KVnote}.
\end{proof}

We now refine the class of almost periodic solutions that we consider. By rescaling arguments as in \cite{KTV2009, KV20101, TVZ2007}, we can guarantee that the almost periodic solutions we consider do not escape to arbitrarily low frequencies on at least half of their maximal lifespan, say $[0,T_{\rm max})$. Using Lemma~\ref{local constancy} to divide $[0,T_{\rm max})$ into characteristic subintervals $J_k$, we arrive at the following theorem.

\begin{theorem}[Two scenarios for blowup]\label{threenew} If Theorem \ref{theorem} fails, then there exists a raidal almost periodic solution $u:[0,T_{\rm max})\times\R^4\to\C$ that blows up forward in time and satisfies
\begin{equation}\label{equ:assumpt}
u\in L_t^\infty \dot H_x^{\frac32}([0,T_{max})\times\R^4).
\end{equation}
Furthermore, we may write $[0,T_{max})=\cup_k J_k$, where
\begin{equation}\label{E:Ng1}
N(t)\equiv N_k\geq 1\quad\text{for}\quad t\in J_k,\quad\text{with}\quad |J_k|\sim_u N_k^{-2}.
\end{equation}
             We classify $u$ according to the following two scenarios: either
             \begin{align}\label{equ:rapid}
& \int_0^{T_{\rm max}} N(t)^{3-2s_c}\,dt<\infty \quad (\text{rapid frequency-cascade solution}),
\end{align}
or
\begin{align}
\label{equ:quasi}
&\int_0^{T_{\rm max}} N(t)^{3-2s_c}\,dt=\infty \quad (\text{quasi-soliton solution}).
\end{align}
\end{theorem}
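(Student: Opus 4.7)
The plan is to start from the radial almost periodic solution $u : I \times \R^4 \to \C$ produced by Theorem \ref{thm:AP1}, and to refine it through three successive reductions: (a) rescale so that the frequency scale $N(t)$ stays bounded below on half of the lifespan; (b) upgrade the a priori control from $\dot H^{s_c}$ to $\dot H^{3/2}$; (c) partition into intervals of local constancy and dichotomize on the Morawetz-type integral.

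For (a), I would follow the rescaling strategy of \cite{KTV2009, KV20101, TVZ2007}. Split $I = I_- \cup I_+$ at $t=0$. If $\inf_{I_+} N = \inf_{I_-} N = 0$, extracting sequences $t_n$ with $N(t_n) \to 0$, rescaling $u$ by $\lambda_n = 1/N(t_n)$, and time-translating to $t_n$ produces new almost periodic solutions $v_n$ with $N_{v_n}(0)=1$ and the same $\dot H^{s_c}$-norm. Corollary \ref{constancy cor} gives that each $v_n$ has lifespan $\supset [-c,c]$ for a uniform $c \sim_u 1$. Passing to an $\dot H^{s_c}$-limit via the Arzel\`a--Ascoli criterion of Remark \ref{remark:radial}, one obtains a nontrivial almost periodic $v_\infty$; a further rescaling-plus-maximality argument shows such a $v_\infty$ cannot coexist with the blow-up of $u$ (essentially because it would yield an almost periodic solution with $N \equiv \text{const}$ that, by regularity upgrades and the small data theory, must be trivial). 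Thus $\inf N > 0$ on at least one of $I_\pm$, say $I_+$; a rescaling via \eqref{eq:scaling} then normalizes this infimum to $1$, yielding a radial almost periodic forward-blowup solution on $[0, T_{\max})$ with $N(t) \geq 1$.

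For (b), the high-frequency portion of $\|u(t)\|_{\dot H^{3/2}}$ is immediate: for $|\xi| \geq 1$ one has $|\xi|^3 \leq |\xi|^{2s_c}$ (using $s_c > 3/2$), so
\[
\int_{|\xi|\geq 1}|\xi|^3|\hat u(t,\xi)|^2\,d\xi \,\leq\, \|u(t)\|_{\dot H^{s_c}}^2,
\]
which is uniformly bounded by \eqref{assume1.1}. The low-frequency portion is the principal obstacle: although \eqref{xiaoc} controls the $\dot H^{s_c}$ mass at $|\xi| \lesssim N(t)$, the weight $|\xi|^{3-2s_c}$ with $3-2s_c < 0$ amplifies exactly those frequencies, so the $\dot H^{s_c}$ smallness does not directly translate into $\dot H^{3/2}$ smallness. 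To close the bound I would run a double Duhamel argument: express $P_{\leq 1} u(t)$ as both a forward and a backward Duhamel integral of the nonlinearity (the backward evolution being available from the original two-sided lifespan of $u$ before specializing to $[0, T_{\max})$), pair the two expressions in $L^2$, and bound the resulting iterated spacetime integral via dispersive and bilinear Strichartz estimates combined with the radial Sobolev embedding, which supplies the extra spatial decay needed for the pairing to converge. The output is a uniform bound on $\|P_{\leq 1} u\|_{L_t^\infty L_x^2}$ that, together with the high-frequency estimate above, produces $u \in L_t^\infty \dot H_x^{3/2}$.

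For (c), Lemma \ref{local constancy} and Definition \ref{def:localconstancy} applied on $[0, T_{\max})$ produce the partition $[0, T_{\max}) = \cup_k J_k$ with $N(t) \equiv N_k$ and $|J_k| \sim_u N_k^{-2}$; the bound $N(t) \geq 1$ from (a) transfers to $N_k \geq 1$, giving \eqref{E:Ng1}. The two scenarios \eqref{equ:rapid} and \eqref{equ:quasi} are then the exhaustive dichotomy on whether $\int_0^{T_{\max}} N(t)^{3-2s_c}\,dt$ is finite or infinite. The crux is the double Duhamel step in (b): one must verify that in the energy-supercritical radial regime $s_c > 3/2$, $d=4$, the Strichartz-based pairing closes with sufficient integrability, and radiality is essential both through Sobolev embedding and in controlling the frequency-localized nonlinearity.
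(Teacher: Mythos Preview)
Your steps (a) and (c) match what the paper actually does --- it simply cites \cite{KTV2009,KV20101,TVZ2007} for the rescaling that arranges $N(t)\geq 1$ on a half-line and then invokes Lemma~\ref{local constancy} for the partition into characteristic subintervals. Your narrative for (a) is somewhat garbled, though: the standard reduction does not argue that a limiting solution $v_\infty$ ``cannot coexist'' with the blow-up of $u$; rather one \emph{replaces} $u$ by the subsequential limit $v_\infty$, which is itself a minimal almost periodic blow-up solution with the desired normalization. No contradiction is derived at this stage.

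The genuine gap is step (b). The double Duhamel trick of \cite{KV2010} requires $d\geq 5$: after pairing the forward and backward Duhamel integrals one is left with $\iint_{s>0>\tau}(s-\tau)^{-d/2}\,g(s)h(\tau)\,ds\,d\tau$, and for bounded $g,h$ this diverges logarithmically when $d=4$. The paper's own introduction flags exactly this obstruction as the reason \cite{KV2010} is confined to $d\geq 5$ and why \cite{DMMZ,MMZ} and the present paper must proceed via long-time Strichartz estimates instead. You invoke radial Sobolev embedding, but that supplies extra \emph{spatial} decay, not the missing \emph{time} integrability; the dispersive rate $|t|^{-2}$ is unchanged for radial data, and your proposal offers no concrete mechanism by which radiality repairs the borderline integral. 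In fact the paper never proves \eqref{equ:assumpt} and never uses it: the frequency-localized Morawetz argument in Section~\ref{flim section} needs only the high-frequency bound \eqref{hct1} on $\||\nabla|^{1/2}u_{>N}\|_{L_t^\infty L_x^2}$, which is obtained there directly from almost periodicity and Bernstein with no appeal to double Duhamel. So the $\dot H^{3/2}$ claim is vestigial, and you should not build your argument around establishing it --- least of all by a method known to fail in this dimension.
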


In view of this theorem, our goal is to preclude  the possibilities
of all the scenarios in the sense of Theorem \ref{threenew}.
The quantity appearing in \eqref{equ:rapid} and \eqref{equ:quasi} is related to the  the \emph{Lin--Strauss Morawetz inequality} of \cite{LinStr}, which is given by
	\begin{equation}\label{eq:ls}
		\iint_{I\times\R^4}\frac{\vert u(t,x)\vert^{p+2}}{\vert x\vert}\,dx\,dt\lesssim
		\norm{\vert\nabla\vert^{1/2}u}_{L_t^\infty L_x^2(I\times\R^4)}^2.
		\end{equation}
Due to the weight $\tfrac{1}{\vert x\vert}$, the Lin--Strauss Morawetz inequality is well suited for preventing concentration near the origin, and hence it is most effective in the radial setting. In fact, it is the use of Lin--Strauss Morawetz inequality that leads to the restriction to the radial setting in Theorem~\ref{theorem}. We cannot use this estimate directly, however, as the solutions we consider need only belong to $L_t^\infty\dot{H}_x^{s_c}$ (and so the right-hand side of \eqref{eq:ls} need be infinite).

A further manifestation of the minimality of $u$ as a blow-up
solution is the absence of the scattered wave at the endpoints of
the lifespan $I$; more formally, we have the following Duhamel
formula, which is important for
showing the additional
decay and negative regularity for the rapid frequency cascade. This
is a robust consequence of almost periodicity modulo symmetries;
see, for example, \cite{CKSTT07}.
\begin{lemma} [Reduced Duhamel formula]\label{nowaste} Let $u: [0,T_{max})\times\R^4\to\C$ be a maximal-lifespan almost periodic solution to \eqref{equ1.1}. Then for all $t\in [0,T_{\rm max})$ we have
\begin{equation}\label{nwd}
u(t)=\lim_{T\nearrow T_{\rm max}}i\int_t^{T}e^{i(t-s)\Delta}F(u(s))\,ds
\end{equation}
as a weak limit in $\dot{H}_x^{s_c}(\R^4)$.
\end{lemma}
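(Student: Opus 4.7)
The plan is to invoke the standard Duhamel identity
\[
u(t) = e^{i(t-T)\Delta}u(T) + i\int_t^T e^{i(t-s)\Delta}F(u(s))\,ds, \qquad t<T<T_{\max},
\]
and recast \eqref{nwd} as the weak convergence $e^{i(t-T)\Delta}u(T) \rightharpoonup 0$ in $\dot H^{s_c}_x(\R^4)$ as $T\nearrow T_{\max}$. Since the linear propagator is unitary and $u\in L_t^\infty\dot H^{s_c}_x$, this family is uniformly bounded in $\dot H^{s_c}_x$, so by density it suffices to verify the vanishing of the inner product against each Schwartz $\phi$ with $\hat\phi\in C_c^\infty(\R^4\setminus\{0\})$; that is,
\[
I(T) := \int_{\R^4}|\xi|^{2s_c}\,e^{-i(t-T)|\xi|^2}\,\hat u(T,\xi)\,\overline{\hat\phi(\xi)}\,d\xi\longrightarrow 0.
\]

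Given any sequence $T_n \nearrow T_{\max}$, I would extract a subsequence along which $N(T_n)\to N_\infty\in[0,\infty]$, and split $I(T_n)$ according to the three frequency shells $\{|\xi|\leq c(\eta)N(T_n)\}$, $\{c(\eta)N(T_n)<|\xi|<C(\eta)N(T_n)\}$, $\{|\xi|\geq C(\eta)N(T_n)\}$. Cauchy--Schwarz with \eqref{xiaoc} and \eqref{apss} shows that each outer shell contributes at most $O(\eta^{1/2}\|\phi\|_{\dot H^{s_c}})$. In the extreme cases $N_\infty\in\{0,\infty\}$, the fixed support of $\hat\phi$ eventually lies entirely in one of the outer shells and the middle contribution vanishes identically for large $n$; letting $\eta\to 0$ concludes these cases.

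The remaining—and main—case is $N_\infty\in(0,\infty)$. Here Corollary~\ref{constancy cor} forbids $T_{\max}<\infty$, forcing $T_n\to\infty$. Since $N(T_n)$ is bounded above and below, Remark~\ref{remark:radial}(ii) implies that $\{u(T_n)\}$ is precompact in $\dot H^{s_c}_x(\R^4)$, so along a further subsequence $u(T_n)\to u_\infty$ strongly. Thus
\[
I(T_n) = \int_{\R^4}|\xi|^{2s_c}\,\hat u_\infty(\xi)\,\overline{\hat\phi(\xi)}\,e^{i(T_n-t)|\xi|^2}\,d\xi + o(1).
\]
The density $|\xi|^{2s_c}\hat u_\infty\overline{\hat\phi}$ lies in $L^1(\R^4)$ because $\hat\phi\in C_c^\infty(\R^4\setminus\{0\})$ and $\hat u_\infty\in L^2$. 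Pushing this density forward under $\xi\mapsto|\xi|^2$ via the co-area formula (using $d\xi=r^3\,dr\,d\sigma$ in polar coordinates on $\R^4$) produces a finite complex Borel measure on $(0,\infty)$, whose one-dimensional Fourier transform at $T_n-t\to\infty$ tends to $0$ by the Riemann--Lebesgue lemma. This dispersion step is the essential obstacle: the low/high-frequency cases are pure frequency-localization arguments, but in the intermediate scenario the compact frequency support of $u_\infty$ genuinely overlaps that of $\hat\phi$, and only the oscillation $e^{i(T_n-t)|\xi|^2}$ drives the pairing to zero.
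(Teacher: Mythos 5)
Your argument is correct. Note first that the paper itself offers no proof of Lemma~\ref{nowaste}: it is simply asserted as ``a robust consequence of almost periodicity'' with a pointer to \cite{CKSTT07} (the detailed version is Proposition~5.23 of \cite{KVnote}). Your reduction to $e^{i(t-T)\Delta}u(T)\rightharpoonup 0$ and the treatment of the regimes $N(T_n)\to 0$ and $N(T_n)\to\infty$ via the two tails \eqref{apss}, \eqref{xiaoc} match the standard route. Where you genuinely diverge is the remaining regime $N_\infty\in(0,\infty)$ (which, by Corollary~\ref{constancy cor}, forces $T_{\max}=\infty$): the reference proof keeps things quantitative by combining the \emph{spatial} concentration on $|x|\le C(\eta)/N(t)$ with the $L^{q'}\to L^{q}$ dispersive decay of $e^{i(T-t)\Delta}\phi$, and thereby avoids passing to subsequences; you instead upgrade almost periodicity to strong sequential compactness of $\{u(T_n)\}$ (legitimate here since $x(t)\equiv 0$ by radiality and $N(T_n)$ stays in a compact subset of $(0,\infty)$) and then kill the oscillatory pairing with $u_\infty$ by Riemann--Lebesgue. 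Your version is softer and arguably cleaner in the radial setting; the standard one generalizes immediately to moving spatial centers. One point to state precisely: Riemann--Lebesgue fails for general finite Borel measures, so you must record that the pushforward of $|\xi|^{2s_c}\hat u_\infty\overline{\hat\phi}\,d\xi$ under $\xi\mapsto|\xi|^2$ is absolutely continuous, with $L^1$ density $\lambda\mapsto\tfrac12\lambda\int_{S^3}(|\cdot|^{2s_c}\hat u_\infty\overline{\hat\phi})(\sqrt{\lambda}\,\omega)\,d\sigma(\omega)$; this is exactly what your polar-coordinate computation gives, so the gap is purely expository.
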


We will use this lemma to show that the almost periodic
solution enjoys the ``long time Strichartz estimate" \`a la
Dodson \cite{Dodson3}, see Theorem \ref{lse} below.
We will utilize this ``long time Strichartz estimate" and the no-waste Duhamel formula to
show the rapid frequency cascade solutions admit the negative
regularity property. Then making use of a similar method used in
\cite{KTV2009,KV2010,KVZ2008}, we can also show that the mass of the
rapid frequency cascade solution is zero and so we get a
contradiction.

 Finally, to preclude the quasi-soliton solutions, one can show that it admits additional decay in the sprit of no-waste
Duhamel formula and the compactness, see \cite[Proposition 3.1]{MMZ}.

\begin{proposition}[Additional decay, \cite{MMZ}]\label{adecay1} Let $s_c>1$. Suppose $u:[0,T_{\rm max})\times\R^4\to\C$ is an almost periodic solution to \eqref{equ1.1} such that $\inf_{t\in[0,T_{\rm max})}N(t)\geq 1$. Then
        \begin{equation}u\in L_t^\infty L_x^{p+1}([0,T_{\rm max})\times\R^4).
        \label{breaking scaling}
        \end{equation}
\end{proposition}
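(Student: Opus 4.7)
The plan is to upgrade the a priori $\dot H_x^{s_c}$ control to an $L_t^\infty L_x^{p+1}$ bound by first establishing a small amount of negative-regularity control, in the spirit of the Killip--Tao--Visan and Killip--Visan technique, and then interpolating. The hypothesis $\inf_t N(t)\geq 1$ is used crucially to ensure that the solution never escapes to arbitrarily low frequencies; together with \eqref{xiaoc} this makes the low-frequency tail of $u$ in $\dot H_x^{s_c}$ quantitatively small at every time.

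Sobolev embedding in dimension four gives $\dot H_x^{s(p+1)}\hookrightarrow L_x^{p+1}$ with $s(p+1)=2(p-1)/(p+1)$, and the elementary computation
\begin{equation*}
s_c-s(p+1)=\tfrac{2(p-1)}{p(p+1)}>0
\end{equation*}
places $s(p+1)$ strictly below the a priori regularity $s_c$. It is therefore natural to split $u=P_{\leq 1}u+P_{>1}u$. The high-frequency piece is handled immediately: Bernstein and the a priori bound yield $\|P_{>1}u(t)\|_{L_x^{p+1}}\lesssim\|P_{>1}u(t)\|_{\dot H_x^{s(p+1)}}\lesssim\|u(t)\|_{\dot H_x^{s_c}}\lesssim 1$ uniformly in $t\in[0,T_{\max})$. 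The task thus reduces to bounding the low-frequency piece $\|P_{\leq 1}u(t)\|_{L_x^{p+1}}$, which in turn, since $p+1\geq 2$, is controlled by $\|P_{\leq 1}u(t)\|_{L_x^2}$ via Bernstein.

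To estimate $\|P_{\leq 1}u(t)\|_{L_x^2}$ I would apply the reduced Duhamel formula \eqref{nwd} frequency by frequency: for each dyadic $N\leq 1$,
\begin{equation*}
P_N u(t)=i\lim_{T\nearrow T_{\max}}\int_t^Te^{i(t-s)\Delta}P_N F(u(s))\,ds,
\end{equation*}
and estimate via the retarded Strichartz inequality paired with Bernstein on $P_N$. Decomposing $[t,T_{\max})=\bigcup_k J_k$ into the intervals of local constancy of Definition~\ref{def:localconstancy} and using the Sobolev embedding $\dot H_x^{s_c}\hookrightarrow L_x^{2p}$ to control the nonlinearity $F(u)=|u|^pu$ in an appropriate dual Strichartz space, together with the frequency localization coming from \eqref{apss}--\eqref{xiaoc} and the standing assumption $N(t)\geq 1$, I would expect to obtain an inequality of the form $\|P_N u\|_{L_t^\infty L_x^2}\lesssim N^\theta$ for some $\theta=\theta(s_c,p)>0$. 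Summing dyadically over $N\leq 1$ then yields $\|P_{\leq 1}u\|_{L_t^\infty L_x^2}\lesssim 1$, completing the proof.

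The step I anticipate will be the main obstacle is extracting the positive Bernstein gain $N^\theta$ in such a way that it is uniform over the local-constancy subintervals of the lifespan. In the quasi-soliton case $[0,T_{\max})$ may have infinite Lebesgue measure, so one cannot afford any divergent temporal factor; on the other hand, each $J_k$ carries only unit Strichartz norm in view of Lemma~\ref{fspssn}. It is precisely the combination of (i) the frequency localization from almost periodicity, (ii) the lower bound $N(t)\geq 1$, and (iii) the condition $s_c>1$ that will produce a summable gain in $N$; for $s_c\leq 1$ one would only obtain $\theta=0$ and the scheme could close at best at the threshold $L_t^\infty L_x^2$, which is insufficient.
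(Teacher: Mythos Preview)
The paper does not prove this proposition; it cites \cite[Proposition~3.1]{MMZ}. Your high--low split and the treatment of $P_{>1}u$ are correct and standard. The gap lies in the mechanism for the low-frequency bound. Applying Strichartz to the reduced Duhamel formula gives, for any admissible pair $(q,r)$,
\[
\|P_N u(t_0)\|_{L_x^2}\lesssim \|P_N F(u)\|_{L_t^{q'}L_x^{r'}([t_0,T_{\max}))}.
\]
From $u\in L_t^\infty\dot H_x^{s_c}\hookrightarrow L_t^\infty L_x^{2p}$ one has only $F(u)\in L_t^\infty L_x^{2p/(p+1)}$; whatever $r'\in[4/3,2]$ one picks, placing $P_N F(u)$ in $L_x^{r'}$ (with or without a Bernstein gain in $N$) still leaves an $L_t^{q'}$ integral over $[t_0,T_{\max}))$ with $q'<\infty$, and this diverges in the quasi-soliton regime. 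Summing over the $J_k$ does not repair it: each $J_k$ contributes $\sim N_k^{-2/q'}$ to the $q'$-th power, and $\sum_k N_k^{-2}\sim|[0,T_{\max})|$ may be infinite. Strichartz supplies no long-time decay, and your items (i)--(iii) do not produce one.

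The missing ingredient is the dispersive estimate. In \cite{MMZ} one splits the Duhamel integral at $|t_0-s|\sim N^{-2}$, bounds the near piece by Bernstein and H\"older in time (the short interval length $N^{-2}$ contributing to the gain), and controls the far piece pointwise via $\|e^{i\tau\Delta}\|_{L_x^{r'}\to L_x^r}\lesssim|\tau|^{-2(1-2/r)}$, which is integrable on $[N^{-2},\infty)$ when $r>4$. A short bootstrap in the Lebesgue exponent, relying on $s_c>1$, then lowers the decay from the scaling-critical $L_x^{2p}$ down to $L_x^{p+1}$. It is this near/far splitting with dispersive decay---not Strichartz over the $J_k$---that resolves the obstacle you correctly flag in your final paragraph.
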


This together with long-time Strichartz estimate allows us to show
the localized Morawetz inequality enjoys the bounds as
follows.

\begin{proposition}\label{uplowbound}  Let $u$ be a quasi-soliton solution to \eqref{equ1.1} in the sense of Theorem
\ref{threenew}. Then for any $\eta>0$, there exists $N_0=N_0(\eta)$
such that for all $N\leq N_0$
\begin{equation*}
\begin{split}
K_I\lesssim_u\int_I\int_{\R^4}\frac{|u_{\geq
N}(t,x)|^{p+2}}{|x|}\;dx\;dt
\lesssim_u\eta\big(N^{1-2s_c}+K_I\big),
\end{split}
\end{equation*}
for all $I\subset[0,+\infty),$ where $K_I:=\int_IN(t)^{3-2s_c}dt.$
\end{proposition}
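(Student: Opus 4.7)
The plan is to establish the lower and upper bounds separately, both by invoking the Lin--Strauss Morawetz identity localized to the high-frequency component $v:=u_{\geq N}$.

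For the lower bound, I would exploit the almost periodicity together with the additional decay $u\in L_t^\infty L_x^{p+1}$ furnished by Proposition \ref{adecay1}. By Remark \ref{remark:radial}, at each $t\in I$ the radial profile $u(t,x)$ is, up to the rescaling $u(t,x)=N(t)^{2/p}\tilde u(t,N(t)x)$, drawn from a compact set in $\dot H_x^{s_c}(\R^4)$; the change of variable $y=N(t)x$ then yields the identity
$$
\int_{\R^4}\frac{|u(t,x)|^{p+2}}{|x|}\,dx \;=\; N(t)^{3-2s_c}\int_{\R^4}\frac{|\tilde u(t,y)|^{p+2}}{|y|}\,dy,
$$
and I would argue that the $y$-integral is $\sim_u 1$ uniformly in $t$: the upper bound comes from Hardy's inequality combined with interpolation between $L^{p+1}$ and the Sobolev embedding $\dot H^{s_c}\hookrightarrow L^{2p}$, and the lower bound from non-triviality of the limit profile on the compact set. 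Since $N(t)\geq 1\geq N$, choosing $N\leq N_0(\eta,u)$ small enough forces the low-frequency tail $u_{<N}$ to be uniformly small in $\dot H^{s_c}$ (via \eqref{xiaoc}), hence also in $L^{p+2}$ by Sobolev and interpolation, so $u$ may be replaced by $u_{\geq N}$ with negligible loss in the weighted integral. Integration in $t$ then produces the lower bound by $K_I$.

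For the upper bound, I would apply the Morawetz identity with weight $a(x)=|x|$ directly to $v$. Writing $iv_t+\Delta v=|v|^p v+\mathcal{E}$ with the paracommutator-type error
$$\mathcal{E}:=P_{\geq N}\bigl(|u|^p u\bigr)-|u_{\geq N}|^p u_{\geq N},$$
the standard Lin--Strauss computation produces schematically
$$
\iint_{I\times\R^4}\frac{|v|^{p+2}}{|x|}\,dx\,dt \;\lesssim\; \|v\|_{L_t^\infty\dot H_x^{1/2}(I)}^2+\Bigl|\iint_{I\times\R^4}\mathcal{E}\cdot\Bigl(\tfrac{x}{|x|}\cdot\nabla\bar v+\tfrac{\bar v}{|x|}\Bigr)\,dx\,dt\Bigr|.
$$
The main term is immediate from Bernstein and $s_c>1/2$: $\|v\|_{\dot H^{1/2}}^2\lesssim N^{1-2s_c}\|u\|_{\dot H^{s_c}}^2\lesssim_u N^{1-2s_c}$.

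The error term is the principal obstacle. My plan is to decompose $|u|^p u$ via paraproducts into pieces whose output is supported at frequencies $\gtrsim N$; the contribution in which every factor of $u$ lives at frequency $\ll N$ cancels against $|v|^p v$ by an almost-orthogonality argument, and what remains are mixed pieces carrying at least one factor at frequency $\gtrsim N$. Each such piece would then be controlled by pairing the long-time Strichartz estimate (Theorem \ref{lse}) with the spacetime bounds of Lemma \ref{fspssn} and Hardy's inequality to absorb the weights $x/|x|$ and $1/|x|$. Smallness is extracted by taking $N\ll\inf_t N(t)\geq 1$; almost periodicity then forces the low-frequency leakage to be $O(\eta)$. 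The resulting contributions naturally split into two types: those carrying the factor $N^{1-2s_c}$, absorbed into the main term, and those comparable to $K_I$, absorbed by the long-time Strichartz estimate summed over characteristic subintervals $J_k$. The delicate point is ensuring every derivative landing on the worst-placed factor can be redistributed at the supercritical scaling $s_c>3/2$; here I would follow the template of \cite{DMMZ,MMZ}, exploiting the extra regularity afforded by the higher scaling exponent to close the estimate.
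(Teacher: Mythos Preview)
Your overall structure---lower bound via compactness, upper bound via the Lin--Strauss Morawetz identity applied to $u_{>N}$---matches the paper's (Lemma~\ref{lem:mor lb2} and Theorem~\ref{uppbd}). There are, however, two concrete gaps.

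For the upper bound, the boundary term must be bounded by $\eta N^{1-2s_c}$, not merely $N^{1-2s_c}$, since the proposition claims $\eta(N^{1-2s_c}+K_I)$. Bernstein alone gives only $\|v\|_{\dot H^{1/2}}^2\lesssim N^{1-2s_c}\|u\|_{\dot H^{s_c}}^2$, with no smallness. The paper extracts the $\eta$ from almost periodicity: split $u_{>N}=u_{N<\cdot\leq c(\eta)}+u_{>c(\eta)}$, use \eqref{xiaoc} on the first piece, and Bernstein (with $N^{s_c-1/2}\ll c(\eta)^{s_c-1/2}$) on the second; this is exactly \eqref{hct1} in Lemma~\ref{lhc}. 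You will also need the additional decay $u\in L_t^\infty L_x^{p+1}$ from Proposition~\ref{adecay1} in the \emph{error} terms, not only in the lower bound: the paper uses it to control $\|u\|_{L_t^\infty L_x^{4p^2/(3p-2)}}$, an exponent strictly between $p+1$ and the scaling-critical $2p$, which cannot be reached from $\dot H^{s_c}$ alone.

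For the lower bound, the replacement step ``$u$ may be replaced by $u_{\geq N}$ with negligible loss in the weighted integral'' is not justified as written. Smallness of $u_{<N}$ in $\dot H^{s_c}$ (or even in $L^{p+2}$, via interpolation with $L^{p+1}$) does not by itself control the difference $\int |u_{<N}|\,|u|^{p+1}/|x|\,dx$ uniformly in $t$: the weighted functional $f\mapsto\int|f|^{p+2}/|x|$ is only lower semicontinuous on $\dot H^{s_c}$, and the rescaled profiles $\tilde u(t)$ carry no uniform subcritical Lebesgue bound (the $L^{p+1}$ norm of $\tilde u$ scales with a positive power of $N(t)$). The paper avoids this entirely by first proving $L^2$ concentration of $u_{>N}$ itself in the ball $\{|x|\leq C(u)/N(t)\}$---a functional that \emph{is} continuous in $\dot H^{s_c}$---and then using H\"older on that ball to pass to the $L^{p+2}$ weighted integral; see \eqref{stepping stone} and the display following it.
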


This leads to a contradiction in the case when $\int_0^\infty
N(t)^{3-2s_c}=+\infty,$ since $K_I$ can be taken arbitrarily large.

The paper is organized as follows. In Section $2$, we give some
useful lemmas. In Section $3$, we show the long time Strichartz
estimate. In Section $4$, we exclude the existence of rapid  frequency cascade solutions in the sense of
Theorem \ref{threenew}. In Section $5$, we
show the good upper bound for the localized Morawetz
estimate. Finally, we rule out the existence of quasi-soliton solution. Hence we conclude the proof of Theorem \ref{theorem}.

\subsection*{Acknowledgements} The authors would like to thank Jason Murphy for his helpful discussions.
This work was partially supported by the ERC-2014-CoG, 646650: SingWave.




\section{Preliminaries}

\subsection{Some notation}
For nonnegative quantities $X$ and $Y$, we will write $X\lesssim Y$ to denote the estimate $X\leq C Y$ for some $C>0$. If $X\lesssim Y\lesssim X$, we will write $X\sim Y$. Dependence of implicit constants on the power $p$ or the dimension will be suppressed; dependence on additional parameters will be indicated by subscripts. For example, $X\lesssim_u Y$ indicates $X\leq CY$ for some $C=C(u)$.

We will use the expression $\text{\O}(X)$ to denote a finite linear combination of terms
 that resemble $X$ up to Littlewood--Paley projections, complex conjugation, and/or maximal functions.
  For example,  We will use the expression $X\pm$ to denote $X\pm\eps$ for any $\eps>0$.

For a spacetime slab $I\times\R^4$, we write $L_t^q L_x^r(I\times\R^4)$ for the Banach space of functions $u:I\times\R^4\to\C$ equipped with the norm
    $$\|u\|_{L_t^{q}L_x^{r}(I\times\R^4)}:=\bigg(\int_I \|u(t)\|_{L_x^r(\R^4)}\bigg)^{1/q},$$
with the usual adjustments when $q$ or $r$ is infinity. When $q=r$, we abbreviate $L_t^qL_x^q=L_{t,x}^q$.
We will also often abbreviate $\norm{f}_{L_x^r(\R^4)}$ to $\norm{f}_{L_x^r}.$ For $1\leq r\leq\infty$,
we use $r'$ to denote the dual exponent to $r$, i.e. the solution to $\tfrac{1}{r}+\tfrac{1}{r'}=1.$

We define the Fourier transform on $\mathbb{R}^4$ by
\begin{equation*}
\aligned \widehat{f}(\xi):= \tfrac{1}{4\pi^2}\int_{\mathbb{R}^4}e^{- ix\cdot \xi}f(x)\,dx.
\endaligned
\end{equation*}
We can then define the fractional differentiation operator $\vert\nabla\vert^s$ for $s\in \R$ via
    $$\widehat{\vert\nabla\vert^s f}(\xi):=\vert\xi\vert^s\hat{f}(\xi),$$
with the corresponding homogeneous Sobolev norm
    $$\norm{f}_{\dot{H}_x^s(\R^4)}:=\norm{\vert\nabla\vert^s f}_{L_x^2(\R^4)}.$$

\subsection{Basic harmonic analysis}

We will make frequent use of the Littlewood--Paley projection operators.
Specifically, we let $\varphi$ be a radial bump function supported
on the ball $|\xi|\leq 2$ and equal to 1 on the ball $|\xi|\leq 1$.
For $N\in 2^{\mathbb{Z}}$, we define the Littlewood--Paley projection operators by
\begin{align*}
&\widehat{P_{\leq N}f}(\xi) := \mathcal{F}(f_{\leq N})(\xi):= \varphi(\xi/N)\widehat{f}(\xi),
 \\ &\widehat{P_{> N}f}(\xi) :=  \mathcal{F}(f_{>N})(\xi) :=
(1-\varphi(\xi/N))\widehat{f}(\xi),
\\ &\widehat{P_{N}f}(\xi) :=  \mathcal{F}(f_N)(\xi):=
(\varphi(\xi/N)-\varphi({2\xi}/{N}))\widehat{f}(\xi).
\end{align*}
We may also define $$P_{M<\cdot\leq N}:=P_{\leq N}-P_{\leq M}=\sum_{M<N'\leq N}P_N'$$
for $M<N.$ All such summations should be understood to be over $N'\in 2^{\mathbb{Z}}.$

The Littlewood--Paley operators commute with derivative operators,
the free propagator, and the conjugation operation. These operators are
self-adjoint and bounded on every $L^p_x$ and $\dot{H}^s_x$ space
for $1\leq p\leq \infty$ and $s\geq 0$. They also obey the
following standard Bernstein estimates:
 For $1\leq r\leq q\leq\infty$ and $s\geq 0$,
\begin{align*}
\big\||\nabla|^{\pm s} P_{N} f \big\|_{L_x^r(\R^4)} & \thicksim  N^{\pm s}
\big\|
P_{N} f \big\|_{L_x^r(\R^4)},  \\
\big\||\nabla|^s P_{\leq N} f \big\|_{L_x^r(\R^4)} & \lesssim   N^{s}
\big\|
P_{\leq N} f \big\|_{L_x^r(\R^4)},  \\
 \big\| P_{> N} f \big\|_{L_x^r(\R^4)} & \lesssim  N^{-s} \big\|
|\nabla|^{s}P_{> N} f \big\|_{L_x^r(\R^4)}, \\
\big\| P_{\leq N} f \big\|_{L^q(\R^4)} & \lesssim
N^{\frac{4}{r}-\frac{4}{q}} \big\|
P_{\leq N} f \big\|_{L_x^r(\R^4)}.
\end{align*}

We will need the following fractional calculus estimates and paraproduct estimates.

\begin{lemma} $(i)$ $($Fractional product rule, \cite{CW}$)$
Let $s\geq0$, and $1<r,r_j,q_j<\infty$ satisfy
$\frac1r=\frac1{r_i}+\frac1{q_i}$ for $i=1,2$. Then
\begin{equation}\label{moser}
\big\||\nabla|^s(fg)\big\|_{L_x^r(\R^4)}\lesssim\|f\|_{{L_x^{r_1}(\R^4)}}\big\||\nabla|^sg
\big\|_{{L_x^{q_1}(\R^4)}}+\big\||\nabla|^sf\big\|_{{L_x^{r_2}(\R^4)}}\|g\|_{{L_x^{q_2}(\R^4)}}.
\end{equation}

$(ii)$ $($Paraproduct estimate, \cite{MMZ}$)$
 Let $0<s<1$. If $1<r<r_1<\infty$ and $1<r_2<\infty$ satisfy
        $\tfrac{1}{r_1}+\tfrac{1}{r_2}=\tfrac{1}{r}+\tfrac{s}{4}<1,$
then
        \begin{equation}\label{pde1}
        \norm{\vert\nabla\vert^{-s}(fg)}_{L_x^r(\R^4)}\lesssim
        \norm{\vert\nabla\vert^{-s}f}_{L_x^{r_1}(\R^4)}\norm{\vert\nabla\vert^{s}g}_{L_x^{r_2}(\R^4)}.
        \end{equation}

$(iii)$ $($Basic estimate$)$ From the Hardy inequality and interpolation, we easily get for $0\leq s\leq1$
\begin{equation}\label{equ:hardy}
\big\||\nabla|^s\big(\tfrac{x}{|x|}u\big)\big\|_{L_x^2(\R^4)}\lesssim\big\||\nabla|^su\big\|_{L^2_x(\R^4)}.
\end{equation}
\end{lemma}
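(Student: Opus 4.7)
Parts (i) and (ii) are imported from external sources: part (i) is the classical Christ--Weinstein fractional product rule and part (ii) is the paraproduct estimate proved in the companion paper \cite{MMZ}. For these I would simply invoke the references; the underlying arguments in both cases proceed via a Littlewood--Paley paraproduct splitting
\[fg=\sum_{N_1\ll N_2}P_{N_1}fP_{N_2}g+\sum_{N_1\sim N_2}P_{N_1}fP_{N_2}g+\sum_{N_1\gg N_2}P_{N_1}fP_{N_2}g,\]
followed by Bernstein's inequality and H\"older. The hypothesis $\tfrac{1}{r_1}+\tfrac{1}{r_2}=\tfrac{1}{r}+\tfrac{s}{4}<1$ in (ii) is precisely what makes the high--high contribution summable once $|\nabla|^{-s}$ is distributed onto the factors.

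The substantive item is the basic estimate (iii). My plan is to establish the two endpoints $s=0$ and $s=1$ directly and then to invoke interpolation. At $s=0$ the estimate is trivial, since $\bigl|\tfrac{x}{|x|}u(x)\bigr|=|u(x)|$ pointwise a.e.\ and the $L_x^2$ norms therefore coincide.

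At the endpoint $s=1$, I would apply the product rule to compute
\[\partial_j\Bigl(\tfrac{x_i}{|x|}u\Bigr)=\tfrac{1}{|x|}\Bigl(\delta_{ij}-\tfrac{x_ix_j}{|x|^2}\Bigr)u+\tfrac{x_i}{|x|}\partial_j u.\]
The matrix $\delta_{ij}-x_ix_j/|x|^2$ is uniformly bounded, so the first term is dominated pointwise by $C|u(x)|/|x|$, which by Hardy's inequality on $\R^4$ is controlled by $\|\nabla u\|_{L_x^2}$; the second term is bounded pointwise by $|\nabla u|$. Combining the two bounds yields $\|\nabla(\tfrac{x}{|x|}u)\|_{L_x^2}\lesssim\|\nabla u\|_{L_x^2}$.

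For the intermediate range $0<s<1$, I would invoke complex interpolation on the homogeneous Sobolev scale: the multiplication operator $T\colon u\mapsto\tfrac{x}{|x|}u$ has now been shown to be bounded on $L_x^2$ and on $\dot H_x^1$, hence it is bounded on $\dot H_x^s$ for every $0\le s\le 1$. No serious obstacle is anticipated; the only mild technicality is that interpolation on the homogeneous Sobolev scale should be carried out modulo polynomials, which is standard and harmless at the regularities considered here. In effect the genuine work in (iii) lies entirely in the $s=1$ endpoint, which is essentially Hardy's inequality coupled with the bound on $\nabla(x/|x|)$.
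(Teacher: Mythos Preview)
Your proposal is correct and follows precisely the route indicated by the paper. The paper offers no proof beyond the parenthetical hint ``From the Hardy inequality and interpolation, we easily get\ldots'' together with the citations \cite{CW} and \cite{MMZ} for parts (i) and (ii); your write-up simply unpacks that hint---trivial $s=0$ endpoint, Hardy plus the bound on $\nabla(x/|x|)$ at $s=1$, then complex interpolation---which is exactly what the authors have in mind.
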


 \subsection{Strichartz estimates}\label{sze}
 Let $e^{it\Delta}$ be the free Schr\"odinger propagator, given by
    \begin{equation}\label{explicit formula}
    [e^{it\Delta}f](x)=\tfrac{-1}{16\pi^2 t^2}\int_{\R^4} e^{i\vert x-y\vert^2/4t}f(y)\,dy
    \end{equation}
for $t\neq 0$. From this explicit formula we can read off the dispersive estimate
    $$\norm{e^{it\Delta}f}_{L_x^\infty(\R^4)}\lesssim\vert t\vert^{-2}\norm{f}_{L_x^1(\R^4)}$$
for $t\neq 0$. Interpolating with $\norm{e^{it\Delta}f}_{L_x^2(\R^4)}\equiv \norm{f}_{L_x^2(\R^4)}$ then yields
\begin{equation}\label{dispers}
\big\|e^{it\Delta}f \big\|_{L^q_x(\R^4)} \leq C|t|^{-2(1-\frac2{q})}
\|f\|_{L^{q'}_x(\R^4)}
\end{equation}
for $t\neq 0$ and $2\leq r\leq\infty$, where $\tfrac{1}{q}+\tfrac{1}{q'}=1.$ This estimate implies the standard Strichartz estimates, which we will state below. First, we need to make the following definition:

\begin{definition}[Admissible pairs]\label{def1} A pair of exponents $(q,r)$ is called \emph{Schr\"odinger admissible} if $2\leq q,r\leq\infty$
and $\tfrac{2}{q}+\tfrac{4}{r}=2.$ For a spacetime slab $I\times\R^4$, we define the Strichartz norm
    $$\norm{u}_{S^0(I)}:=\sup\big\{\|u\|_{L_t^{q}L_x^{r}(I\times\R^4)}:(q,r)\text{ Schr\"odinger admissible}\big\}.$$
We denote $S^0(I)$ to be the closure of all
test functions under this norm and write $N^0(I)$ for the dual of
$S^0(I)$.
\end{definition}

We may now state the standard Strichartz estimates in the form that we will need them.

\begin{proposition}[Strichartz \cite{GV, KeT98, St}]\label{prop1}
Let $s\geq 0$ and suppose $u:I\times\R^4\to\C$ is a solution to $(i\partial_t+\Delta)u=F$. Then
    $$\norm{\vert\nabla\vert^s u}_{S^0(I)}\lesssim\norm{\vert\nabla\vert^s u(t_0)}_{L_x^2(\R^4)}+\norm{\vert\nabla\vert^s F}_{N^0(I)}$$
for any $t_0\in I$.

\end{proposition}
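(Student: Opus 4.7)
The plan is to reduce, via the Duhamel representation
\begin{equation*}
u(t) = e^{i(t-t_0)\Delta} u(t_0) - i \int_{t_0}^{t} e^{i(t-s)\Delta} F(s)\,ds
\end{equation*}
together with the fact that $|\nabla|^s$ commutes with the free propagator, to proving two separate bounds: the homogeneous Strichartz estimate $\|e^{it\Delta} g\|_{S^0(I)} \lesssim \|g\|_{L_x^2(\R^4)}$ for $g = |\nabla|^s u(t_0)$, and the retarded inhomogeneous estimate $\bigl\|\int_{t_0}^t e^{i(t-s)\Delta} G(s)\,ds\bigr\|_{S^0(I)} \lesssim \|G\|_{N^0(I)}$ for $G = |\nabla|^s F$. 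Unpacking the $S^0$ and $N^0$ norms, it suffices to establish both statements for a single admissible pair $(q, r)$ at a time and then take a supremum (resp.\ use duality).

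For the homogeneous bound at a fixed admissible pair $(q, r)$, I would use the standard $TT^*$ argument. By duality, the claim is equivalent to asserting that $T^* G := \int_{\R} e^{-is\Delta} G(s)\,ds$ maps $L_t^{q'} L_x^{r'}$ into $L_x^2$, which in turn reduces to the boundedness of the composed operator $TT^* G(t) = \int_{\R} e^{i(t-s)\Delta} G(s)\,ds$ from $L_t^{q'} L_x^{r'}$ to $L_t^q L_x^r$. The dispersive estimate \eqref{dispers} gives
\begin{equation*}
\bigl\| e^{i(t-s)\Delta} G(s) \bigr\|_{L_x^r(\R^4)} \lesssim |t-s|^{-2(1 - 2/r)} \|G(s)\|_{L_x^{r'}(\R^4)},
\end{equation*}
and Minkowski together with the Hardy--Littlewood--Sobolev inequality in the time variable yields the required bound, precisely because the admissibility condition $2/q + 4/r = 2$ matches the HLS exponents off the double endpoint.

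For the retarded inhomogeneous estimate, since the dualized argument naturally produces the untruncated operator, I would invoke the Christ--Kiselev lemma to upgrade to $\int_{t_0}^t e^{i(t-s)\Delta} G(s)\,ds$, which is valid whenever $q > 2$. Putting these pieces together, applying the bounds with $|\nabla|^s u(t_0)$ and $|\nabla|^s F$ in place of $g$ and $G$, and taking a supremum over admissible pairs yields the conclusion.

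The main obstacle is the double endpoint $(q, r) = (2, \infty)$ in dimension four, where HLS fails and the Christ--Kiselev lemma is unavailable. This case is precisely the content of Keel--Tao \cite{KeT98}, whose dyadic bilinear decomposition combined with atomic interpolation closes the endpoint; one simply cites their result. All remaining ingredients are classical duality, interpolation, and the dispersive estimate.
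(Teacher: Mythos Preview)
The paper does not prove this proposition; it simply cites the standard references \cite{GV, KeT98, St}. Your sketch is precisely the classical argument underlying those references (Duhamel, $TT^*$ with the dispersive decay and Hardy--Littlewood--Sobolev, Christ--Kiselev for the retarded piece, Keel--Tao at the endpoint), so in that sense you are in full agreement with the paper's approach of deferring to the literature.

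One correction: in dimension four the admissibility relation $\tfrac{2}{q}+\tfrac{4}{r}=2$ gives the endpoint pair $(q,r)=(2,4)$, not $(2,\infty)$. The pair $(2,\infty)$ would correspond to $d=2$, where it is in fact forbidden. Your diagnosis of the obstruction is otherwise right---at $q=2$ the convolution kernel is $|t-s|^{-1}$, HLS fails, and Christ--Kiselev does not apply since $q=q'=2$---and Keel--Tao \cite{KeT98} is indeed what closes this endpoint. With that exponent fixed, your outline is correct.
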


\section{Long time Strichartz estimate}
In this section we establish a long-time Strichartz estimate for almost periodic solutions to \eqref{equ1.1} as in Theorem \ref{threenew}.

\begin{theorem}[Long-time Strichartz estimate]\label{lse}
Let $s_c>3/2,$ and let $u:[0,T_{\rm max})\times\R^4\to\C $ be an almost
periodic solution to \eqref{equ1.1} with $N(t)\equiv N_k\geq1$ on
each characteristic $J_k\subset[0,T_{\rm max}).$  Suppose
\begin{equation}\label{equ:assuadi}
u\in L_t^\infty([0,T_{\rm max}), \dot{H}^s(\R^4)),\quad\text{for some}\quad s_c-\tfrac12<s\leq s_c.
\end{equation}
Then on
any compact time interval $I\subset[0,T_{max}),$ which is a union of
characteristic subintervals $J_k$, and for any $N>0,$ we
have
\begin{equation}\label{lse1}
\big\||\nabla|^{s_c}u_{\leq
N}\big\|_{L_t^2L_x^4(I\times\R^4)}\lesssim_u1+N^{\sigma(s)}K_I^{1/2},
\end{equation}
where $K_I:=\int_IN(t)^{3-2s_c}\,dt$ and $\sigma(s)=2s_c-s-\frac12$. In particular, for $s=s_c$, we have
\begin{equation}\label{lse2}
\big\||\nabla|^{s_c}u_{\leq
N}\big\|_{L_t^2L_x^4(I\times\R^4)}\lesssim_u1+N^{s_c-\frac12}K_I^{1/2}.
\end{equation}
 Moreover, for any $\eta>0$, there
exists $N_0=N_0(\eta)$ such that for all $N\leq N_0,$
\begin{equation}\label{lsesmall}
\big\||\nabla|^{s_c}u_{\leq
N}\big\|_{L_t^2L_x^4(I\times\R^4)}\lesssim_u\eta\big(1+N^{s_c-1/2}K_I^{1/2}\big).
\end{equation}
Furthermore, the implicit constants in \eqref{lse1} and
\eqref{lsesmall} are independent of $I$.
\end{theorem}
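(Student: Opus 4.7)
\medskip\noindent\textbf{Proof plan.} Following the scheme of Dodson~\cite{Dodson3}, my strategy is a bootstrap on
$$A(N):=\bigl\||\nabla|^{s_c}u_{\leq N}\bigr\|_{L_t^2L_x^4(I\times\R^4)}.$$
By the reduced Duhamel formula of Lemma~\ref{nowaste}, $u_{\leq N}(t)$ equals the weak $\dot H^{s_c}$-limit of $i\int_t^T e^{i(t-s)\Delta}P_{\leq N}F(u(s))\,ds$, so the free evolution drops out and the Strichartz estimate of Proposition~\ref{prop1} gives
$$A(N)\lesssim\bigl\||\nabla|^{s_c}P_{\leq N}F(u)\bigr\|_{L_t^2L_x^{4/3}(I\times\R^4)},\qquad F(u)=|u|^pu,$$
reducing the problem to a nonlinear estimate in a dual Strichartz space.

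The next step is a Littlewood--Paley decomposition of the nonlinearity. I would write $u=u_{\leq N/8}+u_{>N/8}$ and split $P_{\leq N}F(u)$ into a \emph{low-low} contribution $P_{\leq N}(|u_{\leq N/8}|^pu_{\leq N/8})$ and a \emph{high-frequency} remainder in which at least one factor lies above $N/8$. The fractional product rule \eqref{moser} together with the Sobolev embedding $\dot H^{s_c}\hookrightarrow L^{2p}$ converts the low-low term into a self-referential bound of the form $\|u\|_{L_t^\infty\dot H^{s_c}}^p\cdot A(N)$. For the high-frequency remainder I use frequency balance at the output $P_{\leq N}$ to force at least two factors to sit above $N/8$; pairing Bernstein on those two factors with the hypothesis $u\in L_t^\infty\dot H^s$ ($s<s_c$) then yields a quantitative smallness. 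Using Lemma~\ref{local constancy}, on each characteristic subinterval $J_k\subset I$ one has $|J_k|\sim_u N_k^{-2}$ and $N(t)\equiv N_k$, so $\int_{J_k}N(t)^{3-2s_c}\,dt\sim N_k^{1-2s_c}$; Cauchy--Schwarz in time over the $J_k$'s produces precisely the factor $K_I^{1/2}$, while the $N$-powers from Bernstein and from the $|\nabla|^{s_c}$ derivative combine to the advertised exponent $\sigma(s)=2s_c-s-\tfrac12$.

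Putting the ingredients together yields a recursive inequality of the schematic form
\begin{equation*}
A(N)\leq C_u\bigl(1+N^{\sigma(s)}K_I^{1/2}\bigr)+\eps(N)\cdot A(N),
\end{equation*}
where $\eps(N)\to 0$ as $N\to 0$ by the almost-periodicity description in Remark~\ref{remark:radial}. Absorbing the last term gives \eqref{lse1}, and \eqref{lse2} is its specialization to $s=s_c$. For the refined estimate \eqref{lsesmall}, the same Remark~\ref{remark:radial} supplies, for each $\eta>0$, a frequency threshold $N_0(\eta)$ below which $u_{\leq N}$ carries $\dot H^{s_c}$-mass at most $\eta^2$; this seed smallness propagates through the bootstrap to produce the prefactor $\eta$.

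The main difficulty, as in all Dodson-type long-time Strichartz arguments, lies in the frequency bookkeeping for the non-integer power $|u|^pu$: since a clean multilinear expansion is unavailable, both \eqref{moser} and the paraproduct estimate \eqref{pde1} must be deployed with carefully tuned H\"older triples in space and time so that the Bernstein gain from the two high-frequency factors exactly compensates the loss from $|\nabla|^{s_c}$ and from summing over the intermediate dyadic scales. The lower bound $s>s_c-\tfrac12$ is precisely the threshold at which the resulting dyadic sums converge, and this balance is what fixes $\sigma(s)=2s_c-s-\tfrac12$; the square root in $K_I^{1/2}$ arises from the Cauchy--Schwarz step in time, which is available because the norm being estimated on the left is $L_t^2$-based.
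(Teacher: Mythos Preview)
Your scheme has two structural gaps. First, the reduced Duhamel formula of Lemma~\ref{nowaste} does not let you drop the linear evolution on a compact interval $I$: the integral there runs to $T_{\max}$, so either you estimate the forcing over all of $[0,T_{\max}))$ (and then every bound involves $K_{[0,T_{\max})}$, which is infinite in the quasi-soliton case), or you split the integral at $\sup I$ and recover exactly the ordinary Duhamel formula with data $u_{\leq N}(\sup I)$. The paper takes the latter route, paying $\inf_{t\in I}\||\nabla|^{s_c}u_{\leq N}(t)\|_{L_x^2}$ for the linear piece; this term is essential and reappears in the proof of \eqref{lsesmall} and later in \eqref{gxx}.

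Second, and more seriously, your closure $A(N)\leq C_u(1+N^{\sigma(s)}K_I^{1/2})+\eps(N)A(N)$ cannot be absorbed except for $N$ below some threshold, since the coefficient coming from the low--low piece is $\||\nabla|^{s_c}u_{\leq N/8}\|_{L_t^\infty L_x^2}^p$, which is small only when $N$ is small; for intermediate $N$ you have no argument. The paper's recurrence (Lemma~\ref{rfthm}) is organised differently: with \emph{fixed} small parameters $\eps,\eps_0$ (independent of $N$), the feedback term is $\eps^p B(N/\eps_0)+\sum_{M>N/\eps_0}(N/M)^{s_c}B(M)$, a sum over strictly \emph{higher} dyadic scales, and the argument is a downward induction in $N$ whose base case $N\geq N_{\max}$ is Lemma~\ref{base} (via Lemma~\ref{fspssn}). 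Your treatment of the high-frequency remainder is also off: for the non-polynomial nonlinearity $|u|^pu$ the difference $F(u)-F(u_{\leq N/8})$ carries only one explicit high factor, and projection to output $\leq N$ does not force a second one; in the paper this piece contributes the tail sum $\sum_{M>N/\eps_0}(N/M)^{s_c}B(M)$, which is handled by the inductive hypothesis rather than bounded outright. The quantity $N^{\sigma(s)}K_I^{1/2}$ actually arises from the \emph{low--low} term $P_{\leq N}F(u_{\leq N/\eps_0})$, estimated on each $J_k$ after splitting at the almost-periodicity scale $c(\eps)N_k$: the part below $c(\eps)N_k$ supplies the small coefficient $\eps^p$, while the part above uses $|J_k|\sim N_k^{-2}$, Bernstein, and the $L_t^\infty\dot H^s$ hypothesis; squaring and summing over $k$ produces $\sum_k N_k^{1-2s_c}\sim K_I$.
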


We fix $I\subset[0,T_{\rm max})$ to be a union of contiguous characteristic subintervals. Throughout the proof,
all spacetime norms will be taken over $I\times\R^4$ unless explicitly stated otherwise. For $N>0$, we define the quantities
$$B(N):=\big\||\nabla|^{s_c}u_{\leq
N}\big\|_{L_t^2L_x^4(I\times\R^4)}\quad\text{and}
\quad B_k(N):=\big\||\nabla|^{s_c}u_{\leq
N}\big\|_{L_t^2L_x^4(J_k\times\R^4)}.$$

We will prove Theorem \ref{lse} by induction.  For the base case, we have the following

\begin{lemma}\label{base}
The estimate \eqref{lse1} holds for $N\geq N_{\rm max}:=\sup\limits_{J_k\subset
I}N_k.$
\end{lemma}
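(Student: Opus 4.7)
The plan is to exploit two elementary facts that are available for $N \geq N_{\max}$: first, $u_{\leq N}$ is controlled by $u$ itself through boundedness of the Littlewood--Paley projector $P_{\leq N}$ on $L_x^4$; second, $N(t) \leq N_{\max} \leq N$ throughout $I$, which allows one to trade $N(t)^2$ for $N(t)^{3-2s_c}$ at the cost of the correct power of $N$. The entire lemma then drops out of a direct application of Lemma \ref{fspssn} to the admissible pair $(2,4)$.

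Concretely, I would first observe that since $P_{\leq N}$ is bounded on $L_x^4$ and commutes with $|\nabla|^{s_c}$,
$$B(N) = \||\nabla|^{s_c} u_{\leq N}\|_{L_t^2 L_x^4(I\times\R^4)} \lesssim \||\nabla|^{s_c} u\|_{L_t^2 L_x^4(I\times\R^4)}.$$
The pair $(q,r)=(2,4)$ is Schr\"odinger admissible in $\R^4$ (since $\tfrac{2}{2}+\tfrac{4}{4}=2$), so Lemma \ref{fspssn} yields
$$\||\nabla|^{s_c} u\|_{L_t^2 L_x^4}^2 \lesssim_u 1 + \int_I N(t)^2\,dt.$$

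Next I would split $N(t)^2 = N(t)^{2s_c-1}\cdot N(t)^{3-2s_c}$ and, using $N(t) \leq N_{\max} \leq N$ together with $2s_c - 1 > 0$, bound
$$\int_I N(t)^2\,dt \;\leq\; N^{2s_c-1}\int_I N(t)^{3-2s_c}\,dt \;=\; N^{2\sigma(s_c)}K_I.$$
Taking square roots gives $B(N) \lesssim_u 1 + N^{\sigma(s_c)} K_I^{1/2}$, which is exactly \eqref{lse1} in the case $s = s_c$. For $s_c - \tfrac{1}{2} < s < s_c$, the exponent $\sigma(s) = 2s_c - s - \tfrac{1}{2}$ is strictly larger than $\sigma(s_c)$, and since the hypothesis $N_k \geq 1$ from Theorem \ref{threenew} forces $N \geq N_{\max} \geq 1$, one has $N^{\sigma(s_c)} \leq N^{\sigma(s)}$, so \eqref{lse1} holds uniformly for every $s$ in the stated range.

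I do not anticipate any real obstacle in the base case itself, as it amounts to bookkeeping of exponents: Lemma \ref{fspssn} combined with the pointwise bound $N(t) \leq N$ does essentially all the work. The one point worth checking is that the endpoint admissible pair $(2,4)$ (which is the Strichartz endpoint in four dimensions) is covered by Lemma \ref{fspssn}, which it is since that lemma is stated for all admissible $(q,r)$ with $q < \infty$. The genuine difficulty of Theorem \ref{lse} will lie in the inductive step for $N < N_{\max}$, where one expects to apply Strichartz to the Duhamel formulation together with a careful frequency decomposition of the nonlinearity $|u|^p u$.
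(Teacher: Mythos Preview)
Your proof is correct and follows essentially the same approach as the paper: apply Lemma~\ref{fspssn} to the admissible pair $(2,4)$, then use $N(t)\leq N_{\max}\leq N$ and $2s_c-1>0$ to convert $\int_I N(t)^2\,dt$ into $N^{2s_c-1}K_I$, and finally use $N\geq 1$ to pass from exponent $\sigma(s_c)$ to $\sigma(s)$. The paper's version is just a compressed form of exactly this argument.
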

\begin{proof}
 We have by Lemma \ref{fspssn}
\begin{align*}
\big\||\nabla|^{s_c}u_{\leq
N}\big\|_{L_t^2L_x^4}^2&\lesssim_u1+\int_I N(t)^2\,dt
 \\ &\lesssim_u1+N_{\rm max}^{2s_c-1}\int_IN(t)^{3-2s_c}\,dt
 \\ &\lesssim1+N^{2\sigma(s)}K_I,
\end{align*}
which shows \eqref{lse1}.
\end{proof}

To complete the induction, we establish a recurrence relation for $B(N)$. To this end, we first let $\eps_0>0$ and $\eps>0$ be small parameters to be determined later. We use the compact property to find $c=c(\eps)$ so that
\begin{equation}\label{xcx}
\big\||\nabla|^{s_c}u_{\leq cN(t)}\big\|_{L_t^\infty L_x^2}<
\varepsilon.
\end{equation}
The recurrence relations we will use take the following form.

\begin{lemma}[Recurrence relations for $B(N)$]\label{rfthm}
\begin{equation}\label{rf1}
\begin{split}
B(N)\lesssim_u&\inf_{t\in I}\big\||\nabla|^{s_c}u_{\leq
N}(t)\big\|_{L_x^2}+C(\varepsilon,\varepsilon_0)N^{\sigma(s)}K_I^{1/2}+\varepsilon^pB(N/\varepsilon_0)\\
&+\sum_{M>N/\varepsilon_0}(\tfrac{N}{M})^{s_c}B(M)
\end{split}
\end{equation}
uniformly in $N$ for some positive constant
$C(\varepsilon,\varepsilon_0)$.

We also have the following refinement of \eqref{rf1}:
\begin{equation}\label{rf2}
\begin{split}
B(N)\lesssim_ug(N)\big(1+N^{s_c-1/2}K_I^{1/2}\big)+\varepsilon^pB(N/\varepsilon_0)+\sum_{M>N/\varepsilon_0}(\tfrac{N}{M})^{s_c}B(M),
\end{split}
\end{equation}
where
\begin{equation}\label{g(N)}
g(N):=\inf_{t\in I}\big\||\nabla|^{s_c}u_{\leq N}(t)\big\|_{
L_x^2}+C(\varepsilon,\varepsilon_0)\sup_{J_k\subset
I}\big\||\nabla|^{s_c}u_{\leq N/\varepsilon_0}\big\|_{L_t^\infty
L_x^2(J_k\times\R^4)}.
\end{equation}
\end{lemma}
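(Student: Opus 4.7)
The plan is to apply the inhomogeneous Strichartz estimate of Proposition~\ref{prop1} to the Duhamel formula after projecting $u$ to frequencies $\leq N$, and then decompose the resulting nonlinear term $P_{\leq N} F(u)$ into three pieces matching the three nontrivial contributions on the right-hand side of \eqref{rf1}. Taking Strichartz norms over $I$ with the left-hand pair chosen to be $(2,4)$, and taking the infimum over the starting time of the free evolution, one obtains
$$B(N) \lesssim \inf_{t_0 \in I} \bigl\||\nabla|^{s_c} u_{\leq N}(t_0)\bigr\|_{L_x^2(\R^4)} + \bigl\||\nabla|^{s_c} P_{\leq N} F(u)\bigr\|_{N^0(I)},$$
which already supplies the first term in \eqref{rf1}. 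The work is in decomposing the nonlinear term.

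For the nonlinear term I would split $u = u_{\leq N/\eps_0} + u_{>N/\eps_0}$ and further split $u_{\leq N/\eps_0} = u_{\leq cN(t)} + u_{cN(t)<\cdot\leq N/\eps_0}$, producing three types of contributions to $P_{\leq N}F(u)$. First, in the \emph{high-frequency contributions} (terms in $|u|^p u$ containing at least one factor $u_M$ with dyadic $M>N/\eps_0$), the projector $P_{\leq N}$ gains a factor $(N/M)^{s_c}$ via Bernstein, and distributing $|\nabla|^{s_c}$ onto $u_{\leq M}$ via the fractional product rule \eqref{moser} reproduces $\sum_{M>N/\eps_0}(N/M)^{s_c}B(M)$. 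Second, in the \emph{small-norm contributions} (where $p$ extra factors are taken from $u_{\leq cN(t)}$ and the remaining factor from $u_{\leq N/\eps_0}$ carries the derivative), Sobolev embedding together with \eqref{xcx} extracts $\eps^p$ from those $p$ factors, while the derivative factor is exactly $B(N/\eps_0)$. Third, the \emph{middle-frequency contributions} contain at least one factor from $u_{cN(t)<\cdot\leq N/\eps_0}$, so every Littlewood--Paley component involved is trapped between $cN(t)$ and $N/\eps_0$; Bernstein together with Lemma~\ref{fspssn} then produces a bound of the form $C(\eps,\eps_0)N^{\sigma(s)}K_I^{1/2}$, with $\sigma(s)=2s_c-s-\tfrac12$ emerging from balancing the Bernstein losses against the $K_I=\int_I N(t)^{3-2s_c}\,dt$ factor absorbed through \eqref{equ:assuadi} and Lemma~\ref{fspssn}.

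The refinement \eqref{rf2} follows by observing that the crude $L_t^\infty L_x^2$ bound used in the middle-frequency step above can be replaced by the sharper value $\sup_{J_k\subset I}\||\nabla|^{s_c}u_{\leq N/\eps_0}\|_{L_t^\infty L_x^2(J_k\times\R^4)}$, which together with the free-evolution term is precisely $g(N)$; the remaining dependence on $N$ and $K_I$ collapses to $N^{s_c-1/2}K_I^{1/2}$ when $s=s_c$. The main obstacle I anticipate is bookkeeping: since $p>4$ is generally non-integer, $|u|^p u$ is not polynomial in $u$, so each multilinear piece arising from the low/middle/high decomposition must be estimated either by \eqref{moser} with carefully balanced admissible Strichartz exponents $2/q+4/r=2$, or by a fractional chain rule applied to $|u|^p$; one must simultaneously arrange that in every piece exactly one of the three desired structures (Bernstein gain $(N/M)^{s_c}$, smallness $\eps^p$, or an integrable factor $N(t)^{3-2s_c}$) is made explicit, and that the resulting geometric sum $\sum_{M>N/\eps_0}(N/M)^{s_c}$ converges without logarithmic loss thanks to $s_c>3/2$.
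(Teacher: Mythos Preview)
Your proposal is correct and follows essentially the same route as the paper: Strichartz applied to $P_{\leq N}u$ with the nonlinearity placed in $L_t^2L_x^{4/3}$, followed by a low/middle/high decomposition at the thresholds $N/\eps_0$ and $c(\eps)N_k$ to produce the three terms in \eqref{rf1}. Two small points of streamlining relative to your sketch: for the high-frequency piece the paper writes $F(u)-F(u_{\leq N/\eps_0})$ and uses only Bernstein on $P_{\leq N}$ plus the pointwise bound $|F(u)-F(v)|\lesssim |u|^p|u-v|$ (no fractional product rule is needed there), and in the middle-frequency step the factor $N^{\sigma(s)}K_I^{1/2}$ arises because that piece is nonzero on $J_k$ only when $c(\eps)N_k\leq N/\eps_0$, allowing one to insert $(N/(c\eps_0 N_k))^{s_c-1/2}\geq 1$ for free before summing $\sum_{J_k}N_k^{1-2s_c}\sim K_I$.
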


\begin{proof}
First, by  Strichartz estimate, we get
\begin{equation}\label{nte}
B(N)\lesssim \inf_{t\in I}\big\|{\nsc u_{\leq N}(t)}\big\|_{L_x^2}+\big\||\nabla|^{s_c}P_{\leq
N}F(u)\big\|_{L_t^2L_x^{4/3}}.
\end{equation}
Hence we only need to estimate the nonlinear term. To this end, we decompose
$$F(u)=F(u_{\leq N/\varepsilon_0})+\big(F(u)-F(u_{\leq N/\varepsilon_0})\big).$$
Using Bernstein, H\"older, Sobolev embedding, we obtain
\begin{align}\nonumber
\big\||\nabla|^{s_c}P_{\leq
N}\big(F(u)-F(u_{\leq N/\varepsilon_0})\big)\big\|_{L_t^2L_x^{4/3}}\lesssim&N^{s_c}\big\|F(u)-F(u_{\leq N/\varepsilon_0})\big\|_{L_t^2L_x^{4/3}}\\\nonumber
\lesssim&N^{s_c}\|u\|^p_{L_t^\infty L_x^{2p}}\|u_{>N/\varepsilon_0}\|_{L_t^2L_x^4}\\\label{equ:term1}
\lesssim&\sum_{M>N/\varepsilon_0}\big(\tfrac{N}{M}\big)^{s_c}B(M).
\end{align}

To estimate the term $\big\||\nabla|^{s_c}P_{\leq
N}F(u_{\leq N/\varepsilon_0})\big\|_{L_t^2L_x^{4/3}},$ we restrict  our attention to a single characteristic interval $J_k$. It is easy to see that
\begin{align}\nonumber
&\big\||\nabla|^{s_c}P_{\leq
N}F(u_{\leq N/\varepsilon_0})\big\|_{L_t^2L_x^{4/3}(J_k\times\R^4)}\\\nonumber
\lesssim&\|u_{\leq N/\varepsilon_0}\|_{L_t^\infty L_x^{2p}(J_k\times\R^4)}^p
\big\|\nsc u_{\leq N/\varepsilon_0}\big\|_{L_t^2L_x^4(J_k\times\R^4)}\\\label{equ:term21}
\lesssim&\big\|\nsc P_{\leq c(\varepsilon)N_k}u_{\leq N/\varepsilon_0}\|_{L_t^\infty L_x^{2}(J_k\times\R^4)}^p\big\|\nsc u_{\leq N/\varepsilon_0}\big\|_{L_t^2L_x^4(J_k\times\R^4)}\\\label{equ:term22}
&+\big\|\nsc P_{> c(\varepsilon)N_k}u_{\leq N/\varepsilon_0}\|_{L_t^\infty L_x^{2}(J_k\times\R^4)}^p\big\|\nsc u_{\leq N/\varepsilon_0}\big\|_{L_t^2L_x^4(J_k\times\R^4)}.
\end{align}
By \eqref{xcx}, we have
$$\eqref{equ:term21}\lesssim \varepsilon^p\big\|\nsc u_{\leq N/\varepsilon_0}\big\|_{L_t^2L_x^4(J_k\times\R^4)}.$$
Note that it suffices to consider the case $c(\varepsilon)N_k\leq {N}/{\varepsilon_0}$ in \eqref{equ:term22}, i.e $\frac{N}{c(\varepsilon)N_k\varepsilon_0}\geq1$. In this case, using H\"older, Bernstein, Sobolev embedding, interpolation, Lemma \ref{fspssn}, and \eqref{assume1.1}, we derive that
\begin{align*}
\eqref{equ:term22}\lesssim&\big(\tfrac{N}{c(\varepsilon)N_k\varepsilon_0}\big)^{s_c-\frac12}\big\|\nsc u_{\leq N/\varepsilon_0}\|_{L_t^\infty L_x^{2}(J_k\times\R^4)}\\
\lesssim&\big(\tfrac{N}{c(\varepsilon)N_k\varepsilon_0}\big)^{s_c-\frac12}\big(\tfrac{N}{\varepsilon_0}\big)^{s_c-s}\|u_{\leq N/\varepsilon_0}\|_{L_t^\infty \dot{H}^s(J_k\times\R^4)}.
\end{align*}
Hence,
\begin{align*}
&\big\||\nabla|^{s_c}P_{\leq
N}F(u_{\leq N/\varepsilon_0})\big\|_{L_t^2L_x^{4/3}(I\times\R^4)}^2\\
\lesssim&\sum_{J_k\subset I}\big\||\nabla|^{s_c}P_{\leq
N}F(u_{\leq N/\varepsilon_0})\big\|_{L_t^2L_x^{4/3}(J_k\times\R^4)}^2\\
\lesssim&\sum_{J_k\subset I}\Big[\varepsilon^{2p}\big\|\nsc u_{\leq N/\varepsilon_0}\big\|_{L_t^2L_x^4(J_k\times\R^4)}^2
+\big(\tfrac{N}{cN_k\varepsilon_0}\big)^{2(s_c-\frac12)}\big(\tfrac{N}{\varepsilon_0}\big)^{2(s_c-s)}\|u_{\leq N/\varepsilon_0}\|_{L_t^\infty \dot{H}^s(J_k\times\R^4)}^2\Big]\\
\lesssim&\varepsilon^{2p}B(N/\varepsilon_0)+c^{-2(s_c-\frac12)}\varepsilon_0^{-2\sigma(s)}
\sup_{J_k\subset I}\|u_{\leq N/\varepsilon_0}\|_{L_t^\infty \dot{H}^s(J_k\times\R^4)}^2N^{2\sigma(s)}K_I.
\end{align*}
This together with \eqref{equ:term1} implies Lemma \ref{rfthm}.

\end{proof}

Next, we turn to prove Theorem \ref{lse}.
\begin{proof}[Proof of Theorem \ref{lse}] From Lemma \ref{base}, we see that \eqref{lse1} holds for
$N\geq N_{\rm max}$. That is, we have
\begin{equation}\label{xhdd}
B(N)\leq C(u)\big(1+N^{\sigma(s)}K_I^{1/2}\big),
\end{equation}
for all $N\geq N_{\rm max}$. Clearly this
inequality remains true if we replace $C(u)$ by any larger constant.

We now suppose that \eqref{xhdd} holds for frequencies above $N$ and use the recurrence formula \eqref{rf1} to show that \eqref{xhdd} holds at frequency $N/2$. Choosing $\eps_0<1/2$, we use \eqref{rf1} and \eqref{xhdd} to obtain
\begin{align*}
B\big(N/2\big)
    &\leq \tilde{C}(u)\Big(1+C(\varepsilon,\varepsilon_0)\big(N/2\big)^{\sigma(s)}K_I^{1/2}+\varepsilon^pB(N/2\varepsilon_0)
    +\sum_{M>N/2\varepsilon_0}(\tfrac{N}{2M})^{s_c}B(M)\Big)
    \\ &\leq\tilde{C}(u)\Big(1+C(\varepsilon,\varepsilon_0)\big(N/2\big)^{\sigma(s)}K_I^{1/2}+\varepsilon^p
    \big(1+(N/2\varepsilon_0)^{\sigma(s)}K_I^\frac12\big)\\
    &\qquad\qquad+C(u)\sum_{M>N/2\varepsilon_0}(\tfrac{N}{2M})^{s_c}\big(1+M^{\sigma(s)}K_I^{1/2}\big)\Big)
    \\ &\leq\tilde{C}(u)\Big(1+C(\varepsilon,\varepsilon_0)\big(N/2\big)^{\sigma(s)}K_I^{1/2}+\varepsilon^p
    \big(1+(N/2\varepsilon_0)^{\sigma(s)}K_I^\frac12\big)\\
    &\qquad\qquad
    +C(u)\varepsilon_0^{s_c}+C(u)\varepsilon_0^{s_c-\sigma(s)}(N/2)^{\sigma(s)}K_I^\frac12\Big)
    \\&=\tilde{C}(u)\Big(1+\varepsilon^p+C(u)\varepsilon_0^{s_c}\Big)\\
    &\quad+\tilde{C}(u)\big(C(\varepsilon,\varepsilon_0)+\varepsilon^p\varepsilon_0^{-\sigma(s)}
+C(u)\varepsilon_0^{s_c-\sigma(s)}\big)(N/2)^{\sigma(s)}K_I^{1/2},
\end{align*}
where we use $s_c-\frac12<s$ in the third inequality to guarantee the
convergence of the sum.

If we now choose $\varepsilon_0$ possibly even smaller depending on $\tilde{C}(u)$; $\varepsilon$ sufficiently small depending on $\tilde{C}(u)$ and $\varepsilon_0$; and $C(u)$ possibly larger such that
$$C(u)\geq\tilde{C}(u)\Big(1+\varepsilon^p+C(u)\varepsilon_0^{s_c}\Big)+\tilde{C}(u)
\big(C(\varepsilon,\varepsilon_0)+\varepsilon^p\varepsilon_0^{-\sigma(s)}
+C(u)\varepsilon_0^{s_c-\sigma(s)}\big),$$ we get
\begin{align*}
B\big(N/2\big)
         &\leq C(u)\Big(1+(N/2)^{\sigma(s)}K_I^{1/2}\Big).
\end{align*}
Thus \eqref{xhdd} holds at frequency ${N}/2$, and so we conclude \eqref{lse1} by induction.

Next, we will use the recurrence formula \eqref{rf2} to show \eqref{lsesmall}. To do this, we note that for fixed $\eps,\eps_0>0$, we can use the compact property and the fact that $\inf_{t\in I}N(t)\geq1$ to get
        \begin{equation}\label{g(N) to zero}
        \lim_{N\to0}g(N)=0,
        \end{equation}
where $g(N)$ is as in \eqref{g(N)}.

\end{proof}

\section{The rapid frequency-cascade scenario}

In this section, we rule out the existence of rapid frequency-cascade solutions, that is, almost periodic solutions as in Theorem~\ref{threenew} such that $\int_0^{T_{\rm max}} N(t)^{3-2s_c}\,dt<\infty.$ The proof will rely primarily on the long-time Strichartz estimate proved in the previous section.

\begin{theorem}[No rapid frequency-cascades]\label{no frequency-cascades} Let $s_c>3/2.$ Then there are no radial almost periodic solutions $u:[0,T_{\rm max})\times\R^4\to\C$ to \eqref{equ1.1} with $N(t)\equiv N_k\geq 1$ on each characteristic subinterval $J_k\subset[0,T_{\rm max})$ such that
    \begin{equation}\label{rf blowup}
    \|u\|_{L_{t,x}^{3p}([0,T_{\rm max})\times\R^4)}=+\infty
    \end{equation}
and
    \begin{equation}\label{rf K}
    K:=\int_0^{T_{\rm max}} N(t)^{3-2s_c}\,dt<+\infty.
    \end{equation}
\end{theorem}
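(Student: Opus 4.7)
The plan is to argue by contradiction: assuming both \eqref{rf blowup} and \eqref{rf K}, I will show that $u$ must carry substantial extra regularity beyond $\dot H_x^{s_c}$ all the way down to $L_x^2$ (indeed into slightly negative Sobolev indices), and then exploit the fact that $N_k\to\infty$ together with conservation of mass to force $u\equiv 0$, contradicting \eqref{rf blowup}. The argument splits into three stages.

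\textbf{Step 1 (globalizing the long-time Strichartz bound).} Since $K<\infty$, Theorem~\ref{lse} at level $s=s_c$ yields
\[
\bigl\||\nabla|^{s_c}u_{\leq N}\bigr\|_{L_t^2L_x^4([0,T_{\max})\times\R^4)}\lesssim_u 1+N^{s_c-1/2}K^{1/2}
\]
uniformly in $N>0$, together with the refinement $\lesssim_u\eta\bigl(1+N^{s_c-1/2}K^{1/2}\bigr)$ for $N\leq N_0(\eta)$.

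\textbf{Step 2 (negative regularity).} From the reduced Duhamel formula \eqref{nwd} and Proposition~\ref{prop1},
\[
\|u_{\leq N}\|_{L_t^\infty L_x^2}\lesssim \|P_{\leq N}F(u)\|_{L_t^2L_x^{4/3}}.
\]
Split $F(u)=F(u_{\leq N/\eps_0})+[F(u)-F(u_{\leq N/\eps_0})]$: I would bound the high-frequency remainder by H\"older together with the $L_t^2L_x^4$ control from Step~1, and treat the principal low-frequency piece via the paraproduct estimate \eqref{pde1} and the fractional product rule \eqref{moser}, placing one derivative of order $s_c$ on a single factor so as to match the long-time Strichartz norm. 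This should yield a closed recurrence for $A(N):=N^{-s}\|u_{\leq N}\|_{L_t^\infty L_x^2}$ of the form
\[
A(N)\lesssim_u \eta\,A(N/\eps_0)+\sum_{M>N/\eps_0}\bigl(\tfrac{N}{M}\bigr)^{\alpha}A(M)+B_s,
\]
for some $\alpha>0$ and bounded $B_s$. A finite bootstrap, at each round feeding the newly obtained $\dot H_x^s$ regularity back into Theorem~\ref{lse} (whose hypothesis \eqref{equ:assuadi} remains available as long as $s>s_c-\tfrac12$), progressively decreases $s$; a final Duhamel step in the already-improved regime bridges the last gap and yields $u\in L_t^\infty \dot H_x^{-\eps}$ for some $\eps>0$. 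Interpolation with $u\in L_t^\infty\dot H_x^{s_c}$ then gives in particular $u\in L_t^\infty L_x^2$.

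\textbf{Step 3 (killing the mass).} With $u\in L_t^\infty L_x^2$, the mass $M=\|u(t)\|_{L_x^2}^2$ is conserved. The hypothesis $\|u\|_{L_{t,x}^{3p}}=\infty$ together with $|J_k|\sim N_k^{-2}$ and $K\sim\sum_kN_k^{1-2s_c}<\infty$, $N_k\geq1$, $s_c>3/2$, forces $N_k\to\infty$ along a subsequence $t_k\in J_k$. Following the scheme of \cite{KTV2009,KV2010,KVZ2008}, the radial almost-periodic representation $u(t)=\lambda(t)^{2/p}\phi_t(\lambda(t)\cdot)$ with $\lambda(t)\sim N(t)$ and $\{\phi_t\}$ precompact in $\dot H_x^{s_c}$, combined with the negative regularity of Step~2, shows that $\{\phi_t\}$ is uniformly bounded in $L_x^2$. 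The scaling identity $\|u(t)\|_{L_x^2}^2=N(t)^{-2s_c}\|\phi_t\|_{L_x^2}^2$ then gives $\|u(t_k)\|_{L_x^2}\to 0$, hence $M=0$ and $u\equiv 0$, contradicting \eqref{rf blowup}.

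\textbf{Main obstacle.} The heart of the argument is Step~2. In the supercritical range $p>4$ (so $s_c>3/2$), the nonlinearity $|u|^pu$ has many factors of $u$, and the paraproduct constraint $\tfrac{1}{r_1}+\tfrac{1}{r_2}=\tfrac{1}{r}+\tfrac{s}{4}<1$ in \eqref{pde1} limits how far the regularity index can be dropped in any single bootstrap round. The iteration must be arranged so that each round gains a quantitative amount of regularity without sacrificing the long-time Strichartz input from Step~1; closing the final passage across the threshold $s=s_c-\tfrac12$, where Theorem~\ref{lse} no longer directly applies, using only the bounds already in hand is the most delicate point.
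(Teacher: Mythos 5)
Your three-step architecture (globalized long-time Strichartz $\Rightarrow$ negative regularity via reduced Duhamel $\Rightarrow$ mass vanishing) is exactly the paper's, but two concrete points need repair.

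First, in Step 2 your recurrence for $A(N)=N^{-s}\|u_{\leq N}\|_{L_t^\infty L_x^2}$ cannot close from the bound of Step 1 alone: the estimate $\||\nabla|^{s_c}u_{\leq N}\|_{L_t^2L_x^4}\lesssim_u 1+N^{\sigma(s)}K^{1/2}$ does \emph{not} decay as $N\to0$, so the inhomogeneous term $B_s$ in your recurrence is merely bounded rather than $O(N^{0+})$ after your normalization, and no iteration will produce regularity below $s_c$. The missing ingredient is that \eqref{rf K} together with Corollary~\ref{constancy cor} (and $N_k\geq1$, $|J_k|\sim N_k^{-2}$) forces $N(t)\to\infty$ as $t\to T_{\rm max}$, hence by almost periodicity $\||\nabla|^{s_c}u_{\leq N}(t)\|_{L_x^2}\to0$ for every fixed $N$; taking the limit over exhausting intervals $I_n$ in the recurrence \eqref{rf1} kills both the $\inf_t$ term and the constant, yielding the clean bound $\||\nabla|^{s_c}u_{\leq N}\|_{L_t^2L_x^4([0,T_{\rm max})\times\R^4)}\lesssim_u N^{\sigma(s)}$ for \emph{all} $N>0$. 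With that in hand a single Duhamel estimate (no recurrence in $N$ is needed) gives $\||\nabla|^{s}u_{\leq N}\|_{L_t^\infty L_x^2}\lesssim_u N^{\sigma(s)}$ and hence $u\in L_t^\infty\dot H^\alpha$ for $\alpha>s-\sigma(s)$. Relatedly, the "delicate threshold" you flag at $s=s_c-\tfrac12$ is a non-issue: since $\sigma(s)=2s_c-s-\tfrac12$, the run with $s=s_c$ gives $\alpha>\tfrac12$, and because $s_c-\tfrac12>1>\tfrac12$ a second run with $s=(s_c-\tfrac12)_+$ is already licensed and lands at $\alpha>-\tfrac12+$; the hypothesis $s>s_c-\tfrac12$ is never violated and exactly two rounds suffice.

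Second, Step 3 as written is circular. From $u\in L_t^\infty\dot H^{-\eps}$ one gets $\|\phi_t\|_{\dot H^{-\eps}}\sim N(t)^{s_c+\eps}\|u(t)\|_{\dot H^{-\eps}}$, which blows up as $N(t)\to\infty$; a uniform $L_x^2$ bound on the rescaled profiles $\phi_t$ is equivalent to $\|u(t)\|_{L_x^2}^2\lesssim N(t)^{-2s_c}$, which is the conclusion you are trying to reach. The correct mechanism (and the paper's) is a frequency split at $c(\eta)N(t)$: the high frequencies carry mass $\lesssim(c(\eta)N(t))^{-2s_c}\|u\|_{L_t^\infty\dot H^{s_c}}^2\to0$, while for the low frequencies one interpolates the compactness bound \eqref{xiaoc} with the $\dot H^{-\eps}$ bound on the Fourier side to get $\int_{|\xi|\leq c(\eta)N(t)}|\hat u|^2\lesssim_u\eta^{\eps/(s_c+\eps)}$. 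Sending $t\to T_{\rm max}$ and then $\eta\to0$ gives $M(u_0)=0$, hence $u\equiv0$, contradicting \eqref{rf blowup}.
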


We argue by contradiction. Suppose that $u$ were such a solution. Then, using \eqref{rf K} and Corollary \ref{constancy cor}, we see
\begin{equation}
\lim_{t\to T_{\rm max}}N(t)=\infty,
\end{equation}
whether $T_{\rm max}$ is finite or infinite. Combining this with the compact property, we see that
\begin{equation}\label{gtx}
\lim_{t\to T_{\rm max}}\big\||\nabla|^{s_c}u_{\leq
N}(t)\big\|_{L_x^2(\R^4)}=0
\end{equation}
for any $N>0$.

\begin{proposition}[Lower regularity]\label{prop:lowreg}
Let $s_c>3/2.$ Let $u:[0,T_{\rm max})\times\R^4\to\C$ be an almost periodic solution with \eqref{rf K}. Suppose
\begin{equation}\label{equ:assumsc}
u\in L_t^\infty([0,T_{\rm max}),\dot{H}^s(\R^4))\quad\text{for some}\quad s_c-\tfrac12<s\leq s_c.
\end{equation}
Then,
\begin{equation}\label{equ:lowregu}
u\in L_t^\infty([0,T_{\rm max}),\dot{H}^\alpha(\R^4)),\quad \forall~s-\sigma(s)<\alpha\leq s_c,
\end{equation}
where $\sigma(s)=2s_c-s-\frac12.$
\end{proposition}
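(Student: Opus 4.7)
The approach is to combine the reduced Duhamel formula of Lemma \ref{nowaste}, the dual Strichartz inequality of Proposition \ref{prop1}, and the long-time Strichartz bound of Theorem \ref{lse} --- which under the rapid-cascade assumption \eqref{rf K} yields $\||\nabla|^{s_c} u_{\leq M}\|_{L_t^2 L_x^4} \lesssim_u 1 + M^{\sigma(s)}$ for every $M>0$ --- in order to produce a Littlewood--Paley-dyadic bound on $\|P_N u\|_{L_t^\infty L_x^2}$ at low frequencies which is sharper than the trivial Bernstein bound $N^{-s}$ coming from the hypothesis $u\in L_t^\infty \dot H^s$. Once such a bound is in hand, the sum $\sum_N N^{2\alpha}\|P_N u\|_{L_t^\infty L_x^2}^2$ converges for $\alpha$ in the claimed range, and log-convex interpolation with the standing bound $u \in L_t^\infty \dot H^{s_c}$ extends the conclusion to the full interval $(s-\sigma(s), s_c]$.

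The first step is to use Lemma \ref{nowaste} to represent $P_N u(t)$ as a weakly convergent Duhamel integral from $t$ to $T_{\max}$, and then to dualize the Strichartz inequality against the admissible pair $(q,r) = (2,4)$, which gives
\begin{equation*}
\|P_N u\|_{L_t^\infty L_x^2} \lesssim \|P_N F(u)\|_{L_t^2 L_x^{4/3}}
\end{equation*}
for each dyadic $N$. The task then reduces to proving, for small $N$, the nonlinear estimate $\|P_N F(u)\|_{L_t^2 L_x^{4/3}} \lesssim_u N^{\sigma(s) - s}$.

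To prove this nonlinear estimate, write $F(u) = u\cdot |u|^p$ and apply the paraproduct estimate \eqref{pde1} so that a negative derivative $|\nabla|^{-\beta}$ (for an appropriately chosen $\beta>0$) lands on the factor $u$ while a compensating $|\nabla|^\beta$ lands on $|u|^p$. The low-regularity factor $|\nabla|^{-\beta}u$ is placed in an appropriate Lebesgue space and controlled via Sobolev embedding by the hypothesis $u\in L_t^\infty \dot H^s$. The derivative-loaded factor $|\nabla|^\beta(|u|^p)$ is distributed by the fractional product rule \eqref{moser}: $p-1$ copies of $u$ go in $L_t^\infty L_x^{2p}$ (bounded by Sobolev from $\dot H^{s_c}$), and the remaining $|\nabla|^\beta u$ is placed in $L_t^2 L_x^4$, where a Littlewood--Paley decomposition reduces it to $\||\nabla|^{s_c} u_{\leq M}\|_{L_t^2 L_x^4}$ and the long-time Strichartz bound \eqref{lse1} applies. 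Tracking the exponents through this chain produces the claimed factor $N^{\sigma(s)-s}$. Summing the resulting bound $\|P_N u\|_{L_t^\infty L_x^2}\lesssim_u N^{\sigma(s)-s}$ for $N\leq 1$ together with the hypothesis bound $\|P_N u\|_{L_t^\infty L_x^2}\lesssim_u N^{-s}$ for $N\geq 1$ against $N^{2\alpha}$, both dyadic tails converge exactly when $\alpha\in(s-\sigma(s), s)$, which together with interpolation establishes the proposition.

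The main obstacle is the nonlinear estimate in the third paragraph. One must carefully select the parameter $\beta$ along with the paraproduct and product-rule Lebesgue exponents so that every Sobolev, H\"older, and Bernstein constraint holds on an open range, and so that the long-time Strichartz bound \eqref{lse1} can be fed into the argument with the sharp power $M^{\sigma(s)}$ producing exactly $N^{\sigma(s)-s}$ after dyadic summation. The strict hypothesis $s>s_c-\tfrac12$ is precisely what enforces $\sigma(s)<s_c$, which is what makes these dyadic sums converge and gives the proposition its sharp range.
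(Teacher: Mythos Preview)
Your overall strategy --- reduced Duhamel, dual Strichartz against the pair $(2,4)$, and the long-time Strichartz estimate fed into a nonlinear bound on $P_N F(u)$ --- is the same as the paper's. But there is a real gap in the plan as written.

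You quote Theorem~\ref{lse} in the form $\|\nsc u_{\leq M}\|_{L_t^2L_x^4}\lesssim_u 1+M^{\sigma(s)}$ and then assert that tracking exponents yields $\|P_N F(u)\|_{L_t^2L_x^{4/3}}\lesssim_u N^{\sigma(s)-s}$. This cannot work: for $M\leq 1$ the bound $1+M^{\sigma(s)}$ is simply $O(1)$, so any chain of H\"older/Sobolev/Bernstein inequalities built on it can at best reproduce the trivial estimate $\|P_N u\|_{L_t^\infty L_x^2}\lesssim_u N^{-s}$ coming from the hypothesis $u\in L_t^\infty\dot H^s$. No regularity is gained. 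The paper instead goes back to the recurrence \eqref{rf1} rather than the packaged statement \eqref{lse1}. The additive $1$ in \eqref{lse1} originates from the linear term $\inf_{t\in I}\|\nsc u_{\leq N}(t)\|_{L_x^2}$; under \eqref{rf K} and Corollary~\ref{constancy cor} one has $N(t)\to\infty$, and almost periodicity then gives \eqref{gtx}. Running the induction on intervals $I_n\nearrow[0,T_{\max})$ and sending $n\to\infty$ kills this term and yields the sharp bound \eqref{rfc11}, namely $\|\nsc u_{\leq N}\|_{L_t^2L_x^4([0,T_{\max}))}\lesssim_u N^{\sigma(s)}$ with no additive constant. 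This is the step you are missing.

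A secondary problem: in your paraproduct scheme the entire $N$-dependence is deposited in the single outer Bernstein factor $N^\beta$, after which the remaining quantity $\||\nabla|^\beta u\|_{L_t^2L_x^4}$ carries no frequency localization and its Littlewood--Paley sum against any power bound on $\|\nsc u_M\|_{L_t^2L_x^4}$ diverges at one end. The paper keeps the scale-$N$ cutoff inside the nonlinearity by writing $F(u)=F(u_{\leq N})+\bigl(F(u)-F(u_{\leq N})\bigr)$: the first piece is controlled directly by \eqref{rfc11}, and the second, after Bernstein and the paraproduct estimate \eqref{pde1}, by $\sum_{M>N}(N/M)^{s_c}M^{\sigma(s)}\lesssim N^{\sigma(s)}$, where convergence uses $\sigma(s)<s_c$, i.e.\ exactly the hypothesis $s>s_c-\tfrac12$.
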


\begin{proof}

Let $I_n\subset [0,T_{\rm max})$ be a nested sequence of compact time intervals, each of which is a contiguous union of characteristic subintervals. We claim that for any $N>0$, we have
\begin{equation}\label{gxx}
\big\||\nabla|^{s_c}u_{\leq N}\big\|_{L_t^2L_x^4(I_n\times\R^4)}\lesssim_u\inf_{t\in I_n}
\big\||\nabla|^{s_c}u_{\leq N}(t)\big\|_{L_x^2}+N^{\sigma(s)}.
\end{equation}
Indeed, defining
        $$B_n(N):=\big\||\nabla|^{s_c}u_{\leq N}\big\|_{L_t^2L_x^4(I_n\times\R^4)},$$
we have by \eqref{rf1} and \eqref{rf K}
\begin{align*}
B_n(N) \lesssim_u\inf_{t\in I_n}\big\||\nabla|^{s_c}u_{\leq N}(t)\big\|_{L_x^2}+C(\varepsilon,\varepsilon_0)N^{\sigma(s)}+\varepsilon^pB(N/\varepsilon_0)+\sum_{M>N/\varepsilon_0}(\tfrac{N}{M})^{s_c}B_n(M).
\end{align*}
Arguing as we did to obtain \eqref{lse1}, we derive \eqref{gxx}.

Now, letting $n\to\infty$ in \eqref{gxx} and using \eqref{gtx}, we obtain
\begin{equation}\label{rfc11}
\big\||\nabla|^{s_c}u_{\leq
N}\big\|_{L_t^2L_x^4([0,T_{\rm max})\times\R^4)}\lesssim_u
N^{\sigma(s)}
\end{equation}
for all $N>0$.

We now claim that \eqref{rfc11} implies
\begin{equation}\label{ngtcl}
\big\||\nabla|^{s}u_{\leq N}\big\|_{L_t^\infty
L_x^2([0,T_{\rm max})\times\R^4)}\lesssim_u N^{2s_c-1/2}
\end{equation}
for all $N>0.$ To this end, we first use the reduced Duhamel formula and Strichartz to get
\begin{align}
\big\||\nabla|^{s}u_{\leq N}\big\|_{L_t^\infty
L_x^2([0,T_{\rm max})\times\R^4)}\lesssim
\label{gfues}
\big\||\nabla|^{s}P_{\leq N}F(u)\big\|_{L_t^2L_x^{4/3}([0,T_{\rm max})\times\R^4)}.
\end{align}
%
%

We now decompose $F(u)$ by
\begin{align}\label{fgu}
F(u)=F(u_{\leq N})+\big(F(u)-F(u_{\leq N})\big)
\end{align}
and estimate the contribution of each piece individually.

We begin by estimating the contribution of the first term in \eqref{fgu}
to \eqref{gfues}. Using H\"older, the fractional product rule, the fractional chain rule, Sobolev embedding, interpolation, \eqref{assume1.1}, \eqref{equ:assumsc} and \eqref{rfc11}, we estimate
\begin{align}\nonumber\big\||\nabla|^{s}P_{\leq N}F(u_{\leq N})\big\|_{L_t^2 L_x^{4/3}}
\lesssim&\|u\|_{L_t^\infty L_x^\frac{4p}{s_c-s+2}}^p\big\||\nabla|^s u_{\leq N}\big\|_{L_t^2 L_x^\frac{4}{2/p+s-1}}
\\\nonumber
\lesssim&\|u\|_{L_t^\infty \dot{H}^{s_1}}\big\|\nsc u_{\leq N}\big\|_{L_t^2 L_x^4}\\\nonumber
\lesssim&\big\|\nsc u_{\leq N}\big\|_{L_t^2 L_x^4}
\\\label{equ:firass}
\lesssim_u&N^{\sigma(s)},
\end{align}
where $s_1=2-\frac{s_c-s+2}{p}\in(s,s_c].$

Next, we estimate the contribution of the second term in
\eqref{fgu} to \eqref{gfues}. We can use H\"older, Bernstein, \eqref{pde1}, \eqref{equ:assumsc} and \eqref{rfc11} to estimate
\begin{align*}
&\quad\big\||\nabla|^{s}P_{\leq N}\big(F(u)-F(u_{\leq N})\big)\big\|_{L_t^2
L_x^{4/3}}
\\ &\lesssim
N^{s_c}\Big\||\nabla|^{-(s_c-s)}\int_0^1F'(u_{\leq N}+\lambda u_{>N})u_{>N}\;d\lambda\Big\|_{L_t^2
L_x^{4/3}}
\\
&\lesssim \int_0^1N^{s_c}\big\||\nabla|^{s_c-s}F'(u_{\leq N}+\lambda u_{>N})\big\|_{L_t^\infty
L_x^\frac{2}{s_c-s+1}}\big\||\nabla|^{-(s_c-s)}u_{>N}\big\|_{L_t^2L_x^\frac{4}{2/p+s-1}}\;d\lambda
\\
&\lesssim N^{s_c} \|u\|_{L_t^\infty L_x^{\frac{4p}{s_c-s+2}}}^{p-1}\big\||\nabla|^{s_c-s}u\big\|_{L_t^\infty
L_x^\frac{4p}{(p+1)(s_c-s)+2}}\big\|u_{>N}\big\|_{L_t^2L_x^4}
\\
&\lesssim\|u\|_{L_t^\infty \dot{H}^{s_1}}^{p} \sum_{M>N}\big(\tfrac{N}{M}\big)^{s_c}\big\|\nsc u_M\big\|_{L_t^2L_x^4}\\
&\lesssim_u\sum_{M>N}\big(\tfrac{N}{M}\big)^{s_c}M^{\sigma(s)}\\
&\lesssim_uN^{\sigma(s)},
\end{align*}
where $s_1=2-\frac{s_c-s+2}{p}\in(s,s_c].$
This together with \eqref{equ:firass} and \eqref{gfues} implies the claim \eqref{ngtcl}.

Finally, using Bernstein inequality, \eqref{rfc11}, and \eqref{ngtcl}, we obtain
\begin{align*}
\big\||\nabla|^\alpha  u\big\|_{L_t^\infty L_x^2}\lesssim&\big\||\nabla|^\alpha  u_{\geq1}\big\|_{L_t^\infty L_x^2}
+\sum_{N\leq 1}N^{\alpha-s}\big\||\nabla|^su_N\big\|_{L_t^\infty L_x^2}\\
\lesssim&\big\|\nsc u\big\|_{L_t^\infty L_x^2}+\sum_{N\leq 1}N^{\alpha+\sigma(s)}\\
\lesssim&1,
\end{align*}
where we need the restriction $s-\sigma(s)<\alpha\leq s_c.$

\end{proof}

\begin{theorem}[Negative Regularity]\label{regular}  Let
$u:[0,T_{\rm max})\times\R^4\to\C$ be a  solution to \eqref{equ1.1} which is almost periodic modulo symmetries in the
sense of Theorem \ref{threenew}  with \eqref{rf K}. And assume that
$\inf\limits_{t\in[0,T_{\rm max})}N(t)\geq1$, then  for any $0<\varepsilon<\tfrac12$
\begin{equation}\label{equ:negre}
u\in L_t^\infty([0,T_{\rm max});~\dot H^{-\varepsilon}_x(\R^4)).
\end{equation}
\end{theorem}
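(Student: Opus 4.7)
The plan is to iterate Proposition~\ref{prop:lowreg} twice. A single application can only push the regularity index down to strictly above $1/2$, whereas a second application reaches any exponent strictly above $-1/2$, which covers the range $(-1/2,0)$ requested by the theorem.

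First, I apply Proposition~\ref{prop:lowreg} with $s = s_c$ (valid by the standing hypothesis $u \in L_t^\infty \dot{H}^{s_c}_x$). Since $\sigma(s_c) = s_c - 1/2$, the conclusion is
\[
u \in L_t^\infty\big([0,T_{\rm max});\dot{H}^\alpha_x(\R^4)\big)\quad\text{for all}\quad 1/2 < \alpha \leq s_c.
\]
Because $s_c > 3/2$ gives $s_c - 1/2 > 1 > 1/2$, the output of this step already covers the full admissible hypothesis range $(s_c-1/2,\, s_c]$ needed to re-apply Proposition~\ref{prop:lowreg}.

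For the second step, fix $0 < \varepsilon < 1/2$ and pick $\delta > 0$ so small that $2\delta - 1/2 < -\varepsilon$, that is, $\delta < 1/4 - \varepsilon/2$. Setting $s = s_c - 1/2 + \delta \in (s_c - 1/2, s_c]$, the first step ensures $u \in L_t^\infty \dot{H}^s_x$, and Proposition~\ref{prop:lowreg} now yields $u \in L_t^\infty \dot{H}^\alpha_x$ for every $\alpha$ in $(2\delta - 1/2,\, s_c]$, since
\[
s - \sigma(s) = (s_c - 1/2 + \delta) - (s_c - \delta) = 2\delta - 1/2 < -\varepsilon.
\]
Choosing $\alpha = -\varepsilon$ establishes \eqref{equ:negre}.

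The analytic heavy lifting is already packaged into Proposition~\ref{prop:lowreg} (the long-time Strichartz estimate of Theorem~\ref{lse}, the reduced Duhamel formula, paraproduct and Bernstein bounds, and the vanishing \eqref{gtx} of the $\inf_{t\in I_n}$ term), so at this stage the work is purely algebraic bookkeeping of the exponents. The only subtlety worth flagging is why two iterations are needed: within the admissible range $s \in (s_c - 1/2, s_c]$, the combination $s - \sigma(s) = 2s - 2s_c + 1/2$ approaches $-1/2$ only as $s \downarrow s_c - 1/2$, a regime which is reached only \emph{after} the first iteration has placed $u$ in $\dot{H}^s_x$ for such $s$.
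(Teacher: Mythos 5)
Your proposal is correct and follows exactly the paper's argument: one application of Proposition~\ref{prop:lowreg} with $s=s_c$ to reach all $\alpha\in(1/2,s_c]$, then a second application with $s=(s_c-\tfrac12)_+$ (your $s=s_c-\tfrac12+\delta$), whose output range $(2\delta-\tfrac12,s_c]$ captures any $-\varepsilon$ with $0<\varepsilon<\tfrac12$. The exponent bookkeeping, including the verification that the first step's output legitimizes the second step's hypothesis, matches the paper.
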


\begin{proof}
First, we use Proposition \ref{prop:lowreg} with $s=s_c$ to get
\begin{equation}\label{equ:lowregust1}
u\in L_t^\infty([0,T_{\rm max}),\dot{H}^\alpha(\R^4)),\quad \forall~\tfrac12<\alpha\leq s_c,
\end{equation}
Applying  Proposition \ref{prop:lowreg} with $s=\big(s_c-\tfrac12\big)_+$ again, we obtain \eqref{equ:negre}.

\end{proof}

Now, we turn to prove Theorem \ref{no frequency-cascades}. It follows from Theorem \ref{regular} that $u\in L_t^\infty([0,T_{\rm max}),\dot{H}^{-\varepsilon}(\R^4))$ with $0<\varepsilon<\tfrac12.$
 For $\eta>0$, we can interpolate this bound with \eqref{xiaoc} to get
\begin{equation}\nonumber
\int_{|\xi|\leq c(\eta) N(t)}\big|\hat{u}(t,\xi)\big|^2d\xi\lesssim_u
\eta^\frac{\varepsilon}{s_c+\varepsilon}.
\end{equation}
Hence, we obtain by Plancherel
\begin{align*}
M[u_0]=M[u(t)]&=\int_{|\xi|\leq
c(\eta) N(t)}\big|\hat{u}(t,\xi)\big|^2d\xi+\int_{|\xi|\geq
c(\eta) N(t)}\big|\hat{u}(t,\xi)\big|^2d\xi\\
&\lesssim_u\eta^\frac{\varepsilon}{s_c+\varepsilon}+\big(c(\eta)N(t)\big)^{-2s_c}\big\||\nabla|^{s_c}u\big\|_{L_t^\infty L_x^2}^2\\
&\lesssim_u\eta^\frac{\varepsilon}{s_c+\varepsilon}+\big(c(\eta)N(t)\big)^{-2s_c}.
\end{align*}
Choosing $\eta$ small, letting $t\to T_{\rm max}$, and \eqref{gtx}, we can deduce that $M(u_0)=0$. Thus, we obtain $u\equiv 0$, which contradicts \eqref{rf blowup}.

Therefore, we complete the proof of Theorem  \ref{no frequency-cascades}.

\section{The frequency-localized Morawetz inequality}\label{flim section}

In this section, we establish spacetime bounds for the high-frequency portions of almost periodic solutions to \eqref{equ1.1}. We will use these estimates in the next section to preclude the existence of quasi-soliton solutions in the sense of Theorem \ref{threenew}.

\begin{theorem}[Frequency-localized Morawetz inequality]\label{uppbd} Let $s_c>3/2$, and $u:[0,T_{\rm max})\times\R^4\to\C$ be an almost periodic solution to \eqref{equ1.1} such that $N(t)\equiv N_k\geq 1$ on each characteristic subinterval $J_k\subset[0,T_{\rm max})$. Let $I\subset[0,T_{\rm max})$ be a compact time interval, which is a contiguous union of characteristic subintervals $J_k$. Then for any $\eta>0$, there exists $N_0=N_0(\eta)$ such that for all $N\leq N_0$, we have
\begin{equation}\label{limemd}
\int_I\int_{\R^4}\frac{|u_{> N}(t,x)|^{p+2}}{|x|}\,dx\,dt \lesssim_u\eta\big(N^{1-2s_c}+K_I\big),
\end{equation}
where $K_I:=\int_IN(t)^{3-2s_c}dt$. Furthermore, $N_0$ and the implicit constants above are independent of the interval $I$.
\end{theorem}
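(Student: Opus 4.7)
The plan is to apply a Lin--Strauss--type Morawetz identity with the radial weight $a(x)=|x|$ to the high-frequency truncation $v := u_{>N}$, which satisfies $(i\pa_t+\Delta)v = P_{>N}F(u)$. Writing $P_{>N}F(u) = F(v) + \mathcal{E}$, where $\mathcal{E} := P_{>N}F(u)-F(v)$ collects the frequency-truncation commutator together with the low--high interaction $F(u)-F(v)$, the standard computation (using $\nabla a = x/|x|$, $\Delta a = 3/|x|$, and $-\Delta^{2}a = 3/|x|^{3}$ in dimension four) yields
\[
\tfrac{d}{dt}M_a(t) = \mathrm{Hess}_a(v) + 3\!\int_{\R^4}\tfrac{|v|^2}{|x|^3}\,dx + \tfrac{6p}{p+2}\!\int_{\R^4}\tfrac{|v|^{p+2}}{|x|}\,dx + \mathcal{R}(v,\mathcal{E}),
\]
where $M_a(t):=2\,\Im\!\int_{\R^4}\bar v\,\tfrac{x}{|x|}\cdot\nabla v\,dx$, the tangential Hessian term $\mathrm{Hess}_a(v)$ is non-negative and in fact vanishes pointwise for the radial $v$, and $\mathcal{R}$ is bilinear in $(v,\mathcal{E})$ with multipliers $\nabla a$ or $\Delta a$. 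Integrating over $I$ and discarding the non-negative first two terms on the right reduces the theorem to
\[
\sup_{t\in I}|M_a(t)| + |\mathcal{R}_I(v,\mathcal{E})| \;\lesssim_u\; \eta\,\bigl(N^{1-2s_c}+K_I\bigr).
\]

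For the flux, Cauchy--Schwarz gives $|M_a(t)|\lesssim\|v(t)\|_{L^2}\|\nabla v(t)\|_{L^2}$. I would split the Fourier support of $v$ at the dyadic scale $c(\eta)N(t)$: by the compactness bound \eqref{xiaoc}, the intermediate piece $u_{N<\cdot\le c(\eta)N(t)}\subset u_{\le c(\eta)N(t)}$ has $\dot H^{s_c}$ norm at most $\eta$, while on the complementary piece $u_{>c(\eta)N(t)}$ one applies Bernstein with the floor $c(\eta)N(t)\ge c(\eta)$ (using $N(t)\ge 1$). Expanding the product of the two $L^2$ norms produces terms of the form $\eta^{2}N^{1-2s_c}$, $c(\eta)^{1-2s_c}$, and mixed contributions; each is absorbed into $\eta N^{1-2s_c}$ once $N_0(\eta)\le c(\eta)\,\eta^{1/(2s_c-1)}$. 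This yields $\sup_{t\in I}|M_a(t)|\lesssim_u \eta\, N^{1-2s_c}$.

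The main difficulty lies in bounding $\mathcal{R}_I(v,\mathcal{E})$, which after integration by parts reduces to integrals of the form $\iint_{I\times\R^4}|\nabla v||\mathcal{E}|$ and $\iint_{I\times\R^4}|x|^{-1}|v||\mathcal{E}|$; the latter is converted by the Hardy inequality and \eqref{equ:hardy}. Decomposing
\[
\mathcal{E} = \int_0^1 P_{>N}\bigl(F'(u_{>N}+\theta u_{\le N})u_{\le N}\bigr)\,d\theta - P_{\le N}F(u_{>N}),
\]
every summand carries either a factor of $u_{\le N}$---absorbed by the compactness estimate $\|\nsc u_{\le N/\eps_0}\|_{L_t^\infty L_x^2}\le\eta$---or the projection $P_{\le N}$, which by Bernstein costs at most $N^{s_c}$. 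Pairing these with the long-time Strichartz bound \eqref{lsesmall}, namely $\|\nsc u_{\le N/\eps_0}\|_{L_t^2 L_x^4}\lesssim_u \eta\bigl(1+N^{s_c-1/2}K_I^{1/2}\bigr)$, via H\"older in space-time, the fractional product rule \eqref{moser}, and the paraproduct estimate \eqref{pde1}, each admissible pairing of exponents produces a contribution bounded by $\eta\bigl(N^{1-2s_c}+K_I\bigr)$. The delicate task is distributing the $s_c$ derivatives so that every high-frequency factor is absorbed by \eqref{lsesmall} while each low-frequency factor captures the compactness smallness; the assumption $s_c>3/2$ enters precisely to make the relevant Sobolev embeddings and Bernstein estimates close.
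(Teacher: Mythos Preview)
Your framework is correct---run the Lin--Strauss Morawetz action on $u_{>N}$, extract the main term, and bound the flux and commutator errors---and your treatment of the boundary term $M_a$ is fine (the paper uses $\||\nabla|^{1/2}u_{>N}\|_{L_t^\infty L_x^2}^2$ via \eqref{equ:hardy}, but your $\|v\|_{L^2}\|\nabla v\|_{L^2}$ splitting works just as well). However, the error analysis has two genuine gaps.

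First, your decomposition $P_{>N}F(u)=F(v)+\mathcal{E}$ produces the bracket $\{\mathcal{E},u_{>N}\}_P$, and after integration by parts you land on $\iint|\nabla u_{>N}|\,|\mathcal{E}|$. The factor $\nabla u_{>N}$ is the problem: the long-time Strichartz estimate controls only $\|\nsc u_{\leq N}\|_{L_t^2L_x^4}$, and attempting to place $\nabla u_{>N}$ in $L_t^2L_x^4$ by summing dyadic pieces yields $\sum_{M>N}M^{1-s_c}(1+M^{s_c-1/2}K_I^{1/2})$, whose second part diverges. The paper avoids this by a different algebraic splitting of the momentum bracket,
\[
\{P_{>N}F(u),u_{>N}\}_P=\{F(u),u\}_P-\{F(u_{\leq N}),u_{\leq N}\}_P-\{F(u)-F(u_{\leq N}),u_{\leq N}\}_P-\{P_{\leq N}F(u),u_{>N}\}_P,
\]
which is engineered so that in every error term the derivative lands either on $u_{\leq N}$ (then $\|\nabla u_{\leq N}\|_{L_t^2L_x^{2p}}\lesssim\|\nsc u_{\leq N}\|_{L_t^2L_x^4}$ by Sobolev) or on $P_{\leq N}F(u)$ (then Bernstein trades $\nabla P_{\leq N}$ for a harmless factor of $N$). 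Your scheme does not route the derivative this way, and the tools you list (H\"older, \eqref{moser}, \eqref{pde1}, \eqref{lsesmall}) are not enough to close $\iint|\nabla u_{>N}||u|^p|u_{\leq N}|$.

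Second, and independently, you never invoke the additional-decay input $u\in L_t^\infty L_x^{p+1}$ (Proposition~\ref{adecay1}). This is essential: the paper needs $\|u\|_{L_t^\infty L_x^{4p^2/(3p-2)}}\lesssim_u 1$ with $4p^2/(3p-2)\in(p+1,2p)$ to make the H\"older exponents close in the error terms (e.g.\ in \eqref{iii}). Sobolev from $\dot H^{s_c}$ only gives $L_x^{2p}$, and if you try to run the estimates with $\|u\|_{L_t^\infty L_x^{2p}}^p$ alone, the remaining space exponents come out negative. This scaling-breaking ingredient is what makes the supercritical case go through, and it is missing from your plan.
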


We will use the following corollary to prove Theorem \ref{uppbd}.

\begin{lemma}[Low and high frequencies control]\label{lhc}let $u:[0,T_{\rm max})\times\R^4\to\C $ be an almost
periodic solution to \eqref{equ1.1} with $N(t)\equiv N_k\geq1$ on
each characteristic $J_k\subset[0,T_{\rm max}).$ Then on any compact
time interval $I\subset[0,T_{\rm max})$, which is a union of continuous
subintervals $J_k$, and for any frequency $N>0$, we have
\begin{equation}\label{hct}
\|u_{\geq N}\|_{L_t^2 L_x^4(I\times\R^4)}\lesssim_u
N^{-s_c}\big(1+N^{2s_c-1}K_I\big)^\frac12.
\end{equation}

For any $\eta>0$, there exists $N_1=N_1(\eta)$ such
that for all $N\leq N_1$, we have
\begin{equation}\label{hct1}
\big\||\nabla|^\frac12u_{\geq N}\big\|_{L_t^\infty L_x^2(I\times\R^4)}\lesssim_u\eta
N^{\frac12-s_c}.
\end{equation}

From \eqref{lsesmall}, we know that for any $\eta>0$, there exists $N_2=N_2(\eta)$ such
that for all $N\leq N_2$
\begin{equation}\label{sct}
\big\||\nabla|^{s_c}u_{\leq N}\big\|_{L_t^2
L_x^4(I\times\R^4)}\lesssim_u\eta\big(1+N^{2s_c-1}K_I\big)^\frac12.
\end{equation}

\end{lemma}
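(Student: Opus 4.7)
My plan is to derive \eqref{hct} and \eqref{hct1} from the long-time Strichartz estimate of Theorem \ref{lse}, a dyadic Littlewood--Paley decomposition combined with Bernstein, and --- for \eqref{hct1} --- the low-frequency compactness coming from radial almost periodicity together with $N(t)\geq 1$. Estimate \eqref{sct} is explicitly a restatement of \eqref{lsesmall}, so no further work is required there.

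For \eqref{hct} I would write $u_{\geq N}=\sum_{M\geq N}u_M$ and use the Littlewood--Paley square-function equivalence together with Minkowski's inequality on $L_x^4$ (legitimate since $4\geq 2$) to get
$$\|u_{\geq N}\|_{L_t^2L_x^4}^2 \lesssim \sum_{M\geq N}\|u_M\|_{L_t^2L_x^4}^2.$$
Bernstein then yields $\|u_M\|_{L_t^2L_x^4}\sim M^{-s_c}\||\nabla|^{s_c}u_M\|_{L_t^2L_x^4}$, and applying \eqref{lse2} at frequency $2M$ gives $\||\nabla|^{s_c}u_M\|_{L_t^2L_x^4}\leq\||\nabla|^{s_c}u_{\leq 2M}\|_{L_t^2L_x^4}\lesssim_u 1+M^{s_c-1/2}K_I^{1/2}$. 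Since $s_c>1/2$, the dyadic sums $\sum_{M\geq N}M^{-2s_c}$ and $\sum_{M\geq N}M^{-1}$ are geometric and comparable to $N^{-2s_c}$ and $N^{-1}$ respectively; combining them produces $\|u_{\geq N}\|_{L_t^2L_x^4}^2\lesssim_u N^{-2s_c}+K_I N^{-1}$, which is \eqref{hct} after factoring out $N^{-2s_c}$.

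For \eqref{hct1} the same orthogonality (this time in $L^2_x$, where square-function equivalence is Plancherel) gives
$$\||\nabla|^{1/2}u_{\geq N}\|_{L_t^\infty L_x^2}^2\leq\sum_{M\geq N}M^{1-2s_c}\||\nabla|^{s_c}u_M\|_{L_t^\infty L_x^2}^2.$$
I would split the sum at $M\sim c(\eta)$: for $M\geq c(\eta)$ use the a priori bound $\||\nabla|^{s_c}u_M\|_{L_t^\infty L_x^2}\lesssim_u 1$; for $N\leq M<c(\eta)$ invoke the radial low-frequency compactness \eqref{xiaoc}, using that $N(t)\geq 1$ implies $c(\eta)\leq c(\eta)N(t)$ for every $t$, so $\||\nabla|^{s_c}u_M(t)\|_{L_x^2}^2\leq\eta$ uniformly in $t$. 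Summing --- geometric convergence at the high end is ensured by $s_c>1/2$ --- yields
$$\||\nabla|^{1/2}u_{\geq N}\|_{L_t^\infty L_x^2}^2\lesssim_u\eta N^{1-2s_c}+c(\eta)^{1-2s_c}.$$
The claimed bound then follows by running the argument with $\eta$ replaced by $\eta^2$ and taking $N_1(\eta)$ small enough that $c(\eta^2)^{1-2s_c}\leq\eta^2 N^{1-2s_c}$, which is possible because $1-2s_c<0$.

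The main subtlety is concentrated in \eqref{hct1}: the $\eta$-improvement genuinely uses both the radial low-frequency compactness \eqref{xiaoc} and the normalization $\inf_t N(t)\geq 1$; without these one only recovers $\lesssim_u N^{1/2-s_c}$ with no small constant, which is insufficient for the forthcoming Morawetz argument. The remaining steps --- Littlewood--Paley square function, Bernstein, and geometric summation --- are routine once one notes that both exponents $-2s_c$ and $1-2s_c$ are strictly negative, so that dyadic sums over $M\geq N$ are dominated by their smallest term.
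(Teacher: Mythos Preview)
Your proposal is correct and follows the same route as the paper: dyadic decomposition plus Bernstein plus \eqref{lse2} for \eqref{hct}, and a split at frequency $c(\eta)$ combining the low-frequency compactness \eqref{xiaoc} with the a priori $\dot H^{s_c}$ bound for \eqref{hct1}. The only cosmetic difference is that you pass through the Littlewood--Paley square function and sum in $\ell^2$, whereas the paper simply uses the triangle inequality and sums in $\ell^1$; since $s_c>1/2$ makes both the $\ell^1$ and $\ell^2$ dyadic sums geometric, this has no effect on the outcome.
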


\begin{proof}  Throughout the proof, all spacetime norms will be taken over $I\times\R^4$.

By Bernstein's inequality and \eqref{lse2}, we have
\begin{align*}
\|u_{\geq N}\|_{L_t^2 L_x^4}\lesssim&\sum_{M\geq N}M^{-s_c}\big\|\nsc u_M\big\|_{L_t^2L_x^4}\\
\lesssim&\sum_{M\geq N}M^{-s_c}\big(1+M^{s_c-\frac12}K_I^\frac12\big)\\
\lesssim&N^{-s_c}+N^{-\frac12}K_I^\frac12\\
\lesssim&N^{-s_c}(1+N^{2s_c-1}K_I)^\frac12,
\end{align*}
which shows \eqref{hct}.

Using Remark \ref{remark:radial} $(ii)$ and the fact that $\inf\limits_{t\in I}N(t)\geq 1$, we may find $c(\eta)>0$ such that
\begin{equation}\nonumber
\big\||\nabla|^{s_c}u_{\leq c(\eta)}\big\|_{L_t^\infty
L_x^2}\leq \eta.
\end{equation}
Combining this estimate with Bernstein, we get
\begin{align*}
N^{s_c-\frac12}\big\||\nabla|^\frac12u_{\geq N}\big\|_{L_t^\infty L_x^2}&\lesssim
        N^{s_c-\frac12}\big\||\nabla|^\frac12u_{N\leq\cdot\leq c(\eta)}\big\|_{L_t^\infty L_x^2}+N^{s_c-\frac12}\big\||\nabla|^\frac12u_{\geq c(\eta)}\big\|_{L_t^\infty L_x^2}
        \\& \lesssim\big\||\nabla|^{s_c}u_{\leq c(\eta)}\big\|_{L_t^\infty L_x^2}+\tfrac{N^{s_c-\frac12}}{c(\eta)^{s_c-\frac12}}\big\||\nabla|^{s_c}u\big\|_{L_t^\infty L_x^2}
        \\ & \lesssim\eta + N^{s_c-\frac12}.
\end{align*}
Taking $N$ sufficiently small, we get \eqref{hct1}.

\end{proof}

{\bf The proof of Theorem \ref{uppbd}:} Throughout the proof, all spacetime norms will be taken over $I\times\R^4$.

Let $0<\eta\ll1$ and choose
	$$N<\min\{N_1(\eta), \eta^2 N_2(\eta^{2s_c})\},$$
where $N_1$ and $N_2$ are as in Lemma~\ref{lhc}. In particular, we note that \eqref{hct} gives
	\begin{equation}\label{still small1}
	\xonorm{u_{>N/\eta^2}}{2}{4}\lesssim_u \eta N^{-s_c}(1+N^{2s_c-1}K_I)^{1/2}.
	\end{equation}
Moreover, as $N/\eta^2< N_2(\eta^{2s_c})$, we can apply \eqref{sct} to get
	\begin{equation}\label{still small2}
	\xonorm{\nsc u_{\leq N/\eta^2}}{2}{4}\lesssim_u \eta(1+N^{2s_c-1}K_I)^{1/2}.
	\end{equation}

Define the Morawetz action
$$\text{Mor}(t)=2\,\Im\int_{\R^4}\frac{x}{\vert x\vert}\cdot\nabla u_{>N}(t,x)\bar{u}_{>N}(t,x)\,dx.$$
Since $(i\partial_t+\Delta)u_{>N}=P_{>N}\big(F(u)\big)$, we obtain
	\begin{equation}
	\nonumber
	\partial_t \text{Mor}(t)\gtrsim\int_{\R^4} \frac{x}{\vert x\vert}\cdot\{P_{>N}\big(F(u)\big),u_{>N}\}_P\,dx,
	\end{equation}
where the \emph{momentum bracket} $\{\cdot,\cdot\}_P$ is defined by $\{f,g\}_P:=\Re(f\nabla\bar{g}-g\nabla\bar{f}).$
Thus, by the fundamental theorem of calculus, we get
\begin{align}
\iint_{I\times\R^4}\frac{x}{\vert x\vert}\cdot\{P_{> N}\big(F(u)\big),u_{> N}\}_P\,dx\lesssim\norm{\text{Mor}}_{L_t^\infty(I)}.\label{flmor start}
\end{align}	

Noting that $\{F(u),u\}_P=-\tfrac{p}{p+2}\nabla(\vert u\vert^{p+2}),$ we write
	\begin{align*}
	\{P_{>N}\big(F(u)\big),u_{>N}\}_P
	&=\{F(u),u\}_P-\{F(u_{\leq N}),u_{\leq N}\}_P
	\\ &\quad-\{F(u)-F(u_{\leq N}),u_{\leq N}\}_P-\{P_{\leq N}\big(F(u)\big),u_{>N}\}_P
	\\ &=-\tfrac{p}{p+2}\nabla(\vert u\vert^{p+2}-\vert u_{\leq N}\vert^{p+2})
	-\{F(u)-F(u_{\leq N}),u_{\leq N}\}_P
	\\&\quad-\{P_{\leq N}\big(F(u)\big),u_{>N}\}_P
	\\&=:I+II+III.
	\end{align*}

Integrating by parts, we see that $I$ contributes to the left-hand side of \eqref{flmor start} a multiple of
	\begin{align*}
	\iint_{I\times\R^4}&\frac{\vert u_{>N}(t,x)\vert^{p+2}}{\vert x\vert}\,dx\,dt
	\end{align*}
and to the right-hand side of \eqref{flmor start} a multiple of
	\begin{equation}
	\big\|\tfrac{1}{\vert x\vert}(\vert u\vert^{p+2}-\vert u_{>N}\vert^{p+2}-\vert u_{\leq N}\vert^{p+2})\big\|_{L_{t,x}^1}. \label{i}
	\end{equation}
	
For term $II$, we use $\{f,g\}_P=\nabla \text\O(fg)+\text{\O}(f\nabla g)$. When the derivative hits the product, we integrate by parts. We find that $II$ contributes to the right-hand side of \eqref{flmor start} a multiple of
	\begin{align}
	&\big\|\tfrac{1}{\vert x\vert}u_{\leq N}[F(u)-F(u_{\leq N})]\big\|_{L_{t,x}^1} \label{ii}
	\\ &+\big\|\nabla u_{\leq N}[F(u)-F(u_{\leq N})]\big\|_{L_{t,x}^1}. \label{iii}
	\end{align}

Finally, for $III$, we integrate by parts when the derivative hits $u_{>N}.$ We find that $III$ contributes to the right-hand side of \eqref{flmor start} a multiple of
	\begin{align}
	&\big\|\tfrac{1}{\vert x\vert}u_{>N}P_{\leq N}\big(F(u)\big)\big\|_{L_{t,x}^1} \label{iv}
	\\&+\big\|u_{>N}\nabla P_{\leq N}\big(F(u)\big)\big\|_{L_{t,x}^1}. \label{v}
	\end{align}

Thus, continuing from \eqref{flmor start}, we see that to complete the proof of Theorem~\ref{uppbd} it will suffice to show that
	\begin{equation}\label{btmq}
	\norm{\text{Mor}}_{L_t^\infty(I)}\lesssim_u \eta N^{1-2s_c}
	\end{equation}
and that the error terms \eqref{i} through \eqref{v} are acceptable, in the sense that they can be controlled by $\eta(N^{1-2s_c}+K_I).$

To prove \eqref{btmq}, we use Bernstein, \eqref{hct1}, \eqref{equ:hardy} to estimate
	\begin{align*}
	\norm{\text{Mor}}_{L_t^\infty(I)}&\lesssim\xonorm{\vert\nabla\vert^{-1/2}\nabla u_{>N}}{\infty}{2}\xonorm{\vert\nabla\vert^{1/2}(\tfrac{x}{\vert x\vert}u_{>N})}{\infty}{2}
	\\ &\lesssim\xonorm{\vert\nabla\vert^{1/2}u_{>N}}{\infty}{2}^2
	\lesssim_u \eta N^{1-2s_c}.
	\end{align*}

We next turn to the estimation of the error terms \eqref{i} through \eqref{v}.
For \eqref{i}, we write
	\begin{align}
	\eqref{i}&\lesssim \xonorms{\tfrac{1}{\vert x\vert}(u_{\leq N})^{p+1}u_{>N}}{1}\label{ib1}
	\\ &\quad+\xonorms{\tfrac{1}{\vert x\vert}u_{\leq N}(u_{>N})^{p+1}}{1}.\label{ib2}
	\end{align}

For \eqref{ib1}, we use H\"older, Hardy, the chain rule, Bernstein, \eqref{assume1.1}, \eqref{hct}, and \eqref{sct} to estimate
	\begin{align*}
	\xonorms{\tfrac{1}{\vert x\vert}(u_{\leq N})^{p+1}u_{>N}}{1}
	&\lesssim\xonorm{\tfrac{1}{\vert x\vert}(u_{\leq N})^{p+1}}{2}{4/3}
		\xonorm{u_{>N}}{2}{4}
	\\ &\lesssim \xonorm{\nabla(u_{\leq N})^{p+1}}{2}{4/3}
		\xonorm{u_{>N}}{2}{4}
	\\ &\lesssim\xonorm{u}{\infty}{\frac{4p^2}{3p-2}}^p\xonorm{\nabla u_{\leq N}}{2}{2p}
	\xonorm{u_{>N}}{2}{4}\\
&\lesssim\big\|\nsc u_{\leq N}\big\|_{L_t^2L_x^4}\xonorm{u_{>N}}{2}{4}
	\\ &\lesssim_u \eta N^{-s_c}(1+N^{2s_c-1}K_I)\\
&\lesssim_u \eta N^{1-2s_c}(1+N^{2s_c-1}K_I),
	\end{align*}
which is acceptable, where we used  Proposition \ref{adecay1} and $u\in L_t^\infty L_x^{2p}$
to get
$$\xonorm{u}{\infty}{\frac{4p^2}{3p-2}}\lesssim 1,\quad\tfrac{4p^2}{3p-2}\in(p+1,2p),$$
and $N<1$.

For \eqref{ib2}, we consider two cases. If $\vert u_{\leq N}\vert\ll \vert u_{>N}\vert$, then we can absorb this term into the left-hand side of \eqref{flmor start}, provided we can show
	\begin{equation}\label{is it finite}
	\xonorms{\tfrac{1}{\vert x\vert}\vert u_{>N}\vert^{p+2}}{1}<\infty.
	\end{equation}
Otherwise, we are back in the situation of \eqref{ib1}, which we have already handled. Thus, to render \eqref{ib2} an acceptable error term it suffices to establish \eqref{is it finite}. To do this, we use Hardy, Sobolev embedding, Bernstein, and Lemma~\ref{fspssn} to estimate
	\begin{align*}
	\xonorms{\tfrac{1}{\vert x\vert} \vert u_{>N}\vert^{p+2}}{1}
	&\lesssim \xonorms{\vert x\vert^{-\frac{1}{p+2}}u_{>N}}{p+2}^{p+2}
	\lesssim\xonorms{\vert\nabla\vert^{\frac{1}{p+2}}u_{>N}}{p+2}^{p+2}
	\\ &\lesssim\xonorm{\vert\nabla\vert^{\frac{2p-1}{p+2}}u_{>N}}{p+2}{\frac{2(p+2)}{p+1}}^{p+2}
	 \lesssim N^{1-2s_c}\xonorm{\vert\nabla\vert^{s_c} u}{p+2}{\frac{2(p+2)}{p+1}}^{p+2}
	\\ &\lesssim_u N^{1-2s_c}\Big(1+\int_I N(t)^{2}\,dt\Big)<\infty.
	\end{align*}

We next turn to \eqref{ii}. Writing
	\begin{align*}
	\xonorms{\tfrac{1}{\vert x\vert}u_{\leq N}[F(u)-F(u_{\leq N})]}{1}
	\lesssim\xonorms{\tfrac{1}{\vert x\vert}(u_{\leq N})^{p+1}u_{>N}}{1}
	+\xonorms{\tfrac{1}{\vert x\vert}u_{\leq N}(u_{>N})^{p+1}}{1},
	\end{align*}
we recognize the error terms that we just estimated, namely \eqref{ib1} and \eqref{ib2}. Thus \eqref{ii} is acceptable.

For \eqref{iii}, we use H\"older,   Proposition \ref{adecay1}, \eqref{assume1.1},  \eqref{hct}, and \eqref{sct} to estimate
	\begin{align*}
	\eqref{iii}&\lesssim\xonorm{\nabla u_{\leq N}}{2}{2p}
	\xonorm{u_{>N}}{2}{4}
	\xonorm{u}{\infty}{\frac{4p^2}{3p-2}}^p\\
&\lesssim \big\|\nsc u_{\leq N}\big\|_{L_t^2L_x^4}\xonorm{u_{>N}}{2}{4}\\
&\lesssim  \eta N^{-s_c}(1+N^{2s_c-1}K_I)
	\\&\lesssim \eta N^{1-2s_c}(1+N^{2s_c-1}K_I),
	\end{align*}
which is acceptable.

Finally, for \eqref{iv} and \eqref{v}, we first use Hardy to estimate
	\begin{align*}
	\eqref{iv}&+\eqref{v}
	\\&\lesssim\xonorm{u_{>N}}{2}{4}\xonorm{\tfrac{1}{\vert x\vert} P_{\leq N}\big( F(u)\big)}{2}{4/3}+\xonorm{u_{>N}}{2}{4}\xonorm{\nabla P_{\leq N}\big(F(u)\big)}{2}{4/3}
	\\&\lesssim\xonorm{u_{>N}}{2}{4}\xonorm{\nabla P_{\leq N}\big(F(u)\big)}{2}{4/3}
	\end{align*}

Thus, by \eqref{hct}, we only need to prove
	\begin{equation}
	\nonumber
	\xonorm{\nabla P_{\leq N}\big(F(u)\big)}{2}{4/3}\lesssim_u \eta N^{1-s_c}(1+N^{2s_c-1}K_I)^{1/2}.
	\end{equation}

To this end, we use H\"older, Bernstein, the fractional chain rule, \eqref{assume1.1}, \eqref{still small1}, and \eqref{still small2} to estimate
	\begin{align*}
	\xonorm{\nabla P_{\leq N}\big(F(u)\big)}{2}{4/3}
	&\lesssim N\xonorm{F(u)-F(u_{\leq N/\eta^2})}{2}{4/3}+N^{1-s_c}\xonorm{\nsc F(u_{\leq N/\eta^2})}{2}{4/3}
	\\ &\lesssim N\xonorm{u}{\infty}{2p}^p\xonorm{u_{>N/\eta^2}}{2}{4}
	+N^{1-s_c}\xonorm{u}{\infty}{2p}^p\xonorm{\nsc u_{\leq N/\eta^2}}{2}{4}
	\\ &\lesssim_u \eta N^{1-s_c}(1+N^{2s_c-1}K_I)^{1/2}.
	\end{align*}

Therefore, we
complete the proof of Theorem~\ref{uppbd}.

\section{The quasi-soliton scenario}\label{qs section}
In this section, we rule out the existence of quasi-soliton soutions, that is, solutions as in Theorem \ref{threenew} such that $\int_0^{T_{\rm max}}N(t)^{3-2s_c}\,dt=\infty.$ The proof will rely primarily on the frequency-localized  Morawetz estimate.
\begin{theorem}[No quasi-solitons]\label{no quasi-solitons}
Let $s_c>3/2$. Then there are no radial almost periodic solutions $u:[0,T_{\rm max})\times\R^4\to\C$ to \eqref{equ1.1} with $N(t)\equiv N_k\geq 1$ on each characteristic subinterval $J_k\subset[0,T_{\rm max})$ that satisfy
            \begin{equation}\label{rfcmd12}
            \nonumber
            \|u\|_{L_{t,x}^{3p}([0,T_{\rm max}))}=\infty
            \end{equation}
and
            \begin{equation}\label{rfkjfs}
            K:=\int_0^{T_{\rm max}}N(t)^{3-2s_c}\,dt=\infty.
            \end{equation}
\end{theorem}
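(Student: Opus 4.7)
The plan is to argue by contradiction, combining the frequency-localized Morawetz upper bound (Theorem~\ref{uppbd}) with a matching lower bound of Lin--Strauss type to produce the two-sided estimate of Proposition~\ref{uplowbound}, and then exploit the divergence of $K$ to obtain a contradiction.

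First I would derive the lower bound
\[
K_I \;\lesssim_u\; \int_I\!\!\int_{\R^4}\frac{|u_{>N}(t,x)|^{p+2}}{|x|}\,dx\,dt
\]
by running a Lin--Strauss Morawetz computation on the high-frequency piece $u_{>N}$. Concretely, I set $v := u_{>N}$, which solves $(i\partial_t+\Delta)v = P_{>N}F(u)$, and test the standard radial Morawetz identity with weight $1/|x|$ against $v$. The main term yields a multiple of $\int\!\!\int |v|^{p+2}/|x|\,dx\,dt$, while the lower bound for $K_I$ comes from controlling this quantity from below by $\int_I N(t)^{3-2s_c}\,dt$ using radial almost periodicity: on each characteristic interval $J_k$ the function $u(t)$ lives (up to errors) at frequency $\sim N_k$ and is concentrated on $\{|x|\lesssim N_k^{-1}\}$, so that $|u|^{p+2}/|x| \gtrsim N_k^{3+2/(p/2)} = N_k^{5-2s_c}$ on a set of measure $\sim N_k^{-4}$, giving a contribution $\sim N_k^{1-2s_c}\cdot |J_k| \cdot N_k^{2} \sim N_k^{1-2s_c}$, which is precisely $N(J_k)^{1-2s_c}\sim \int_{J_k} N(t)^{3-2s_c}\,dt$. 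The error terms from the commutator $[P_{>N},\,\cdot\,]$ and from replacing $u$ by $u_{>N}$ in the nonlinearity are controlled, for $N \leq N_0(\eta)$ small, by $\eta(N^{1-2s_c}+K_I)$ using the long-time Strichartz estimate (Theorem~\ref{lse}), in exactly the same spirit as the error estimates in the proof of Theorem~\ref{uppbd}.

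Next I would combine this lower bound with Theorem~\ref{uppbd} to get, for every compact $I\subset[0,T_{\max})$ that is a union of characteristic subintervals and every $N\leq N_0(\eta)$,
\[
K_I \;\leq\; C(u)\,\eta\,\bigl(N^{1-2s_c}+K_I\bigr).
\]
Fix $\eta$ so small that $C(u)\eta \leq 1/2$; then the term $C(u)\eta K_I$ on the right can be absorbed into the left, yielding
\[
K_I \;\leq\; 2C(u)\,\eta\,N^{1-2s_c},
\]
a bound that depends only on $u$, $\eta$, and $N$, but \emph{not} on $I$.

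Finally I would invoke the hypothesis \eqref{rfkjfs}: since $K = \int_0^{T_{\max}} N(t)^{3-2s_c}\,dt = \infty$, one can choose an increasing sequence of compact unions of characteristic subintervals $I_n \nearrow [0,T_{\max})$ along which $K_{I_n}\to\infty$. This contradicts the uniform bound just obtained, completing the proof. The main obstacle is the lower-bound step: producing the Lin--Strauss Morawetz inequality for the \emph{frequency-localized} function $u_{>N}$ while keeping the commutator and projection errors absorbable into $\eta(N^{1-2s_c}+K_I)$. This is where the long-time Strichartz estimate, the radiality (so $x(t)\equiv 0$), the assumption $N(t)\geq 1$ (so that we genuinely localize to high frequencies), and the additional decay from Proposition~\ref{adecay1} (which upgrades $u\in L_t^\infty L_x^{2p}$ to $u\in L_t^\infty L_x^{p+1}$) are all used in a delicate way.
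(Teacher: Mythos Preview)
Your overall architecture---combine a lower bound $K_I\lesssim_u\iint |u_{>N}|^{p+2}/|x|$ with the upper bound of Theorem~\ref{uppbd}, absorb the $\eta K_I$ term, and contradict $K=\infty$---matches the paper exactly. The difference is in how the lower bound is obtained.

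You propose to run a second Morawetz computation on $v=u_{>N}$, treating the commutator and projection errors as in Theorem~\ref{uppbd}. This is unnecessary and somewhat confused: a Morawetz identity produces an \emph{upper} bound on $\iint |v|^{p+2}/|x|$ (boundary terms dominate the positive bulk term), which is precisely what Theorem~\ref{uppbd} already does. For the \emph{lower} bound the paper uses no differential identity at all. It quotes the $L^2$-concentration estimate (from \cite[(7.3)]{MMZ})
\[
\inf_{t\in I} N(t)^{2s_c}\int_{|x|\leq C(u)/N(t)} |u_{>N}(t,x)|^2\,dx \gtrsim_u 1
\]
for $N$ small, and then applies H\"older on the ball $\{|x|\leq C(u)/N(t)\}$ to pass from $L_x^2$ to $L_x^{p+2}$:
\[
\iint\frac{|u_{>N}|^{p+2}}{|x|}\gtrsim_u\int_I N(t)\Bigl(N(t)^4\int_{|x|\leq C(u)/N(t)}|u_{>N}|^2\Bigr)^{\frac{p+2}{2}}N(t)^{-4}\,dt\gtrsim_u\int_I N(t)^{3-2s_c}\,dt.
\]
Your concentration heuristic on each $J_k$ is exactly this idea (modulo an arithmetic slip: the pointwise size should be $N_k^{7-2s_c}$, not $N_k^{5-2s_c}$), but it should be framed as a direct lower bound on the spacetime integral, not wrapped in a Morawetz identity. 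In particular there are no commutator errors to control on the lower-bound side, and Proposition~\ref{adecay1} plays no role here---it is used only in the proof of Theorem~\ref{uppbd}.
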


\begin{lemma}[Lower bound]\label{lem:mor lb2} Let $u:[0,T_{\rm max})\times\R^3\to\C$ be a radial almost periodic solution as in Theorem~\ref{threenew} with $s_c>3/2$. Let $I\subset[0,T_{\rm max}).$ Then there exists $N_0>0$ such that for any $N<N_0,$ we have
	\begin{equation}
	K_I\lesssim_u \iint_{I\times\R^4}\frac{\vert u_{>N}(t,x)\vert^{p+2}}{\vert x\vert}\,dx\,dt,
	\label{eq:mor lb2}
	\end{equation}
with $K_I:=\int_IN(t)^{3-2s_c}\,dt$.
\end{lemma}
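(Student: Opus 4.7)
The plan is to obtain the lower bound characteristic-subinterval by characteristic-subinterval, by rescaling each $J_k$ to unit frequency scale and exploiting compactness modulo symmetries together with the radiality of $u$, and then to sum using the relation in Definition \ref{def:localconstancy}.

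Fix a characteristic subinterval $J_k \subset I$ with left endpoint $t_k$, on which $N(t) \equiv N_k \geq 1$ and $|J_k| \sim_u N_k^{-2}$. Rescale by setting $v_k(\tau, y) := N_k^{-2/p} u(t_k + N_k^{-2}\tau,\, N_k^{-1}y)$; then $v_k$ solves \eqref{equ1.1} on a rescaled interval $\tilde J_k$ of length $\sim_u 1$, has frequency scale identically $1$, is radial, and inherits the compactness modulus functions from $u$. A direct change of variables gives the pointwise-in-time scaling identity
\begin{equation*}
\int_{\R^4}\frac{|u_{>N}(t,x)|^{p+2}}{|x|}\,dx \;=\; N_k^{\,3-2s_c}\int_{\R^4}\frac{|(v_k)_{>N/N_k}(\tau, y)|^{p+2}}{|y|}\,dy,
\end{equation*}
so it suffices to show the uniform lower bound $\int_{\R^4} |(v_k)_{>N/N_k}(\tau,y)|^{p+2}/|y|\,dy \gtrsim_u 1$ for all sufficiently small $N \leq N_0$ (depending only on $u$), uniformly in $\tau \in \tilde J_k$ and $k$.

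To establish this lower bound, first observe that $\inf_{t\in[0,T_{\max})}\|u(t)\|_{\dot H^{s_c}_x} \gtrsim_u 1$: otherwise, small-data theory applied at a time $t_n$ with $\|u(t_n)\|_{\dot H^{s_c}}\to 0$ would force $\|u\|_{L^{3p}_{t,x}([t_n,T_{\max}))}\to 0$, contradicting the normalization $\|u\|_{L^{3p}_{t,x}(J_k)}=1$ on every characteristic subinterval. Hence $\|v_k(\tau)\|_{\dot H^{s_c}}\gtrsim_u 1$ uniformly. Next, by Remark \ref{remark:radial} applied to the rescaled solution, and using that $N/N_k\leq N_0$ can be made smaller than $c(\eta)$ by choosing $N_0$ small (here the hypothesis $N_k\geq 1$ is essential), we obtain
\begin{equation*}
\big\||\nabla|^{s_c}(v_k)_{\leq N/N_k}(\tau)\big\|_{L^2}^2 \leq \eta \quad\text{and}\quad \int_{|y|\geq C(\eta)}\big||\nabla|^{s_c}v_k(\tau)\big|^2\,dy \leq \eta.
\end{equation*}
Together these show that the family $\{(v_k)_{>N/N_k}(\tau)\}$ stays uniformly bounded and uniformly away from $0$ in $\dot H^{s_c}$ (choosing $\eta$ small enough that subtracting $\sqrt\eta$ does not swallow the lower bound), and its spatial profile is concentrated in $|y|\lesssim_u 1$. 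The functional $f\mapsto \int_{\R^4}|f|^{p+2}/|y|\,dy$ is continuous on this precompact set: combine the radial Sobolev embedding $\dot H^{s_c}(\R^4)\hookrightarrow L^{2p}(\R^4)$ with H\"older, using local integrability of $|y|^{-2p/(p-2)}$ near the origin, which holds precisely because $p>4$, together with the spatial concentration; it is also strictly positive on nonzero elements, hence it admits a strictly positive lower bound depending only on $u$.

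Combining the scaling identity with this pointwise lower bound yields $\int_{\R^4}|u_{>N}(t,x)|^{p+2}/|x|\,dx \gtrsim_u N_k^{\,3-2s_c}$ for every $t\in J_k$. Integrating in time and using $|J_k|\sim_u N_k^{-2}$ from Lemma \ref{local constancy} gives $\iint_{J_k\times\R^4}|u_{>N}|^{p+2}/|x|\,dx\,dt \gtrsim_u N_k^{\,1-2s_c}$, and summing over the characteristic subintervals making up $I$ with the identification $\sum_k N_k^{\,1-2s_c}\sim \int_I N(t)^{3-2s_c}\,dt = K_I$ from Definition \ref{def:localconstancy} finishes the proof. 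I expect the main obstacle to be justifying the uniform positivity in the middle step: one must simultaneously handle the spatial tail (via radiality), the low-frequency piece (forced small by $N\leq N_0$ together with $N_k\geq 1$), and the non-triviality of $u$, and only the combination of all three ingredients permits a uniform lower bound on the Morawetz integrand that survives the projection to frequencies $>N$.
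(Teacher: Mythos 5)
Your argument is correct and rests on the same underlying mechanism as the paper's: compactness of the rescaled orbit at unit frequency scale, non-triviality of $u$ (via small-data theory), smallness of the frequencies below $N_0$ coming from \eqref{xiaoc} together with $N_k\geq 1$, and the scaling identity $\int |u_{>N}|^{p+2}|x|^{-1}\,dx = N_k^{3-2s_c}\int |(v_k)_{>N/N_k}|^{p+2}|y|^{-1}\,dy$. The only structural difference is where the compactness is cashed in: the paper quotes the $L^2$ concentration bound \eqref{stepping stone} (i.e.\ (7.3) of \cite{MMZ}, proved by exactly the rescaling argument you describe) and then converts it to the weighted $L^{p+2}$ integral by two applications of H\"older on the ball $|x|\leq C(u)/N(t)$, using $|x|^{-1}\gtrsim N(t)$ there; you instead bound the Morawetz density from below directly on the precompact set. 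The paper's route is slightly more robust, since positivity of $\int_{|y|\leq C}|f|^2$ on the compact set is immediate, whereas your route needs the functional $f\mapsto\int|f|^{p+2}|y|^{-1}\,dy$ to be lower semicontinuous and strictly positive on the closure of the family $\{P_{>M}f: f\in K,\ M\leq N_0\}$ uniformly over the varying projections; this does hold (lower semicontinuity via $\dot H^{s_c}\hookrightarrow L^{2p}$ and Fatou suffices for a lower bound, so full continuity and the tail discussion are not actually needed), but it is the one step you should spell out. Both routes end with the same bookkeeping $\sum_k N_k^{3-2s_c}|J_k|\sim\sum_k N_k^{1-2s_c}\sim K_I$.
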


\begin{proof}
First, by the same argument as  (7.3) in \cite{MMZ},
we deduce that there exists $C(u)$ sufficiently large and $N_0>0$ so that for $N<N_0$
        \begin{equation}\label{stepping stone}
        \inf_{t\in I}N(t)^{2s_c}\int_{\vert x\vert\leq\frac{C(u)}{N(t)}}\vert u_{>N}(t,x)\vert^{2}\,dx\gtrsim_u 1.
        \end{equation}
This together with H\"older's inequality yields
	\begin{align*}
	\iint_{I\times\R^4}\frac{\vert u_{> N}(t,x)\vert^{p+2}}{\vert x\vert}\,dx\,dt&\gtrsim_u \int_I N(t)\int_{\vert x\vert\leq\frac{C(u)}{N(t)}}\vert u_{> N}(t,x)\vert^{p+2}\,dx\,dt
	\\ &\gtrsim_u\int_I N(t)^{1+2p}\bigg(\int_{\vert x\vert\leq\frac{C(u)}{N(t)}}\vert u_{> N}(t,x)\vert^2\,dx\bigg)^{\frac{p+2}{2}}\,dt
	\\ &\gtrsim_u \int_I N(t)^{1+2p}\big(N(t)^{-2s_c}\big)^{\frac{p+2}{2}}\,dt
	\gtrsim_u K_I.
	\end{align*}
Thus, we complete the proof of Lemma \ref{lem:mor lb2}.
		\end{proof}

Finally, we turn to prove Theorem \ref{no quasi-solitons}.
Suppose $u$ were such a solution. Let $\eta>0$ and let $I\subset[0,T_{\rm max})$ be a compact time interval, which is a contiguous union of characteristic subintervals.

Combining \eqref{limemd} and \eqref{eq:mor lb2}, we find that for $N$ sufficiently large, we have
	$$K_I\lesssim_u \eta(N^{1-2s_c}+K_I).$$
Choosing $\eta$ sufficiently small, we deduce $K_I\lesssim_u N^{1-2s_c}$ uniformly in $I$. We now contradict \eqref{rfkjfs} by taking $I$ sufficiently large inside of $[0,T_{\rm max})$. This completes the proof of Theorem~\ref{no quasi-solitons}. Therefore, we conclude Theorem \ref{theorem}.

\begin{center}

\end{center}

\end{document}